\newcommand{\al}{\alpha}
\newcommand{\de}{\delta}
\newcommand{\la}{\lambda}
\newcommand{\eps}{\varepsilon}
\newcommand{\vv}{\varphi}
\newcommand{\iy}{\infty}
\theoremstyle{plain}
\numberwithin{equation}{section}
\newtheorem{thm}{Theorem}[section]
\newtheorem{lem}[thm]{Lemma}
\newtheorem{cor}[thm]{Corollary}
\theoremstyle{definition}
\newtheorem{example}[thm]{Example}
\newtheorem{alg}[thm]{Algorithm}
\newtheorem{ip}[thm]{Inverse Problem}
\theoremstyle{remark}
\begin{document}

\begin{center}
{\Large\bf Finite-difference approximation of the inverse\\[0.2cm]
Sturm-Liouville problem with frozen argument}
\\[0.2cm]
{\bf Natalia P. Bondarenko} \\[0.2cm]
\end{center}

\vspace{0.5cm}

{\bf Abstract.} This paper deals with the discrete system being the finite-difference approximation of the Sturm-Liouville problem with frozen argument. The inverse problem theory is developed for this discrete system. We describe the two principal cases: degenerate and non-degenerate.  For these two cases, appropriate inverse problems statements are provided, uniqueness theorems are proved,  and reconstruction algorithms are obtained. Moreover, the relationship between the eigenvalues of the continuous problem and its finite-difference approximation is investigated. We obtain the ``correction terms'' for approximation of the discrete problem eigenvalues by using the eigenvalues of the continuous problem. Relying on these results, we develop a numerical algorithm for recovering the potential of the Sturm-Liouville operator with frozen argument from a finite set of eigenvalues. The effectiveness of this algorithm is illustrated by numerical examples.

\medskip

{\bf Keywords:} inverse spectral problems; nonlocal operators; Sturm-Liouville operator with frozen argument; finite-difference approximation; numerical method.

\medskip

{\bf AMS Mathematics Subject Classification (2010):} 34K29 34K28 34K08 34K10 34B36 65L03 15A29 

\vspace{1cm}

\section{Introduction} \label{sec:intr}

This paper is concerned with the finite-difference approximation
\begin{gather} \label{eqv1}
-\frac{y_{j + 1} - 2 y_j + y_{j - 1}}{h^2} + q_j y_m = \la y_j, \quad j = \overline{1, l},
\\ \label{bc1}
y_0 = y_{l + 1} = 0
\end{gather}
of the eigenvalue problem for the functional-differential Sturm-Liouville equation
\begin{gather} \label{eqv2}
    -y''(x) + q(x) y(a) = \la y(x), \quad x \in (0, \pi), \\ \label{bc2}
    y(0) = y(\pi) = 0.
\end{gather}
Here $\la$ is the spectral parameter, $a \in (0, \pi)$ is the so-called \textit{frozen argument}, $q \in L_2(0, \pi)$ is a complex-valued function called \textit{the potential}, $l \in \mathbb N$, $h = \frac{\pi}{l + 1}$, $y_j = y(x_j)$, $q_j = q(x_j)$, $x_j = j h$, $j = \overline{0, l + 1}$, the index $m \in \{ 1, \ldots, l\}$ is such that $x_m = a$.

Equation~\eqref{eqv2} belongs to the class of the so-called \textit{loaded equations} \cite{Nakh12, Nakh82}, which contain the values of the unknown function at some fixed points. Such equations have been applied for studying groundwater dynamics \cite{Nakh82, NB77}, heating processes \cite{Isk71, DKN76}, feedback-like phenomena in vibrations of a wire affected by a magnetic field \cite{Krall75, BH21}. It is also worth mentioning that the operators induced by equation~\eqref{eqv2} are adjoint to the certain differential operators with nonlocal integral boundary conditions (see \cite{Lom14, Pol21}), arising in the investigation of diffusion processes \cite{Fel54}. Since loaded equations are difficult for analysis, it is natural to develop numerical methods for their solution. In particular, finite-difference approximations of the boundary value problems (but not eigenvalue problems) for ordinary and partial loaded differential equations were constructed and analyzed in \cite{BV00, ABSL08, SL09, AA16}. Exact solutions for a special class of loaded difference equations were obtained in \cite{PP18-1, PP18-2}.

This paper is mostly focused on the theory of inverse spectral problems. Such problems consist in the recovery of operators from their spectral characteristics. The most significant progress in inverse problem theory was achieved for the differential Sturm-Liouville equation (see the monographs \cite{Mar77, Lev84, PT87, FY01}):
\begin{equation} \label{StL}
-y''(x) + q(x) y(x) = \la y(x).
\end{equation}
However, equation~\eqref{eqv2} with frozen argument is nonlocal, therefore the classical methods of inverse problem theory do not work for it.

Inverse problems for the functional-differential equation~\eqref{eqv2} were studied in \cite{AHN07, Niz09, BBV19, BV19, XY19, BK20, HBY20, WZZW21, BH21, TLBCS21}. In particular, it has been shown in \cite{BBV19} that the spectrum of the problem~\eqref{eqv2}-\eqref{bc2} is the countable set of the eigenvalues $\{ \la_n \}_{n = 1}^{\iy}$ (counting with multiplicities) with the asymptotics
\begin{equation} \label{asymptla0}
\la_n = \left( n + \frac{\varkappa_n}{n} \right)^2, \quad n \in \mathbb N, \quad \{ \varkappa_n \} \in l_2.
\end{equation}

The inverse spectral problem for the Sturm-Liouville operator with frozen argument is formulated as follows.

\begin{ip} \label{ip:cont}
Given the eigenvalues $\{ \la_n \}_{n = 1}^{\iy}$, find the potential $q$.
\end{ip}

The uniqueness of the recovery of the potential $q(x)$ from the eigenvalues $\{ \la_n \}_{n = 1}^{\iy}$ depends on $a$. There are two principal cases:

\textbf{Degenerate case} (see \cite{BBV19, BV19, BK20, TLBCS21}). If $\frac{\pi}{a} \in \mathbb Q$, that is, $\frac{\pi}{a} = \frac{j}{k}$, $j, k \in \mathbb N$, $j < k$, then a part of the spectrum degenerates:
\begin{equation} \label{deg}
\la_{nk} = (nk)^2, \quad n \in \mathbb N.
\end{equation}
In this case, the potential $q(x)$ cannot be uniquely recovered from the spectrum $\{ \la_n \}_{n = 1}^{\iy}$. However, in \cite{BK20}, the classes of iso-spectral potentials are described and additional restrictions sufficient for the uniqueness of the inverse problem solution are provided. Furthermore, in \cite{BBV19, BV19, BK20} constructive algorithms for solving the inverse problems together with the necessary and sufficient conditions of their solvability are obtained.

\textbf{Non-degenerate case} (see \cite{WZZW21}). If $\frac{\pi}{a} \not\in \mathbb Q$, then there is no degeneration of form~\eqref{deg} and the solution of Inverse Problem~\ref{ip:cont} is unique in $L_2(0, \pi)$.

In this paper, we develop the inverse problem theory for the discrete system~\eqref{eqv1}-\eqref{bc1}, having a finite set of the eigenvalues $\{ \la_{n,l} \}_{n=1}^l$. We study the following inverse problem.

\begin{ip} \label{ip:disch}
Given the eigenvalues $\{ \la_{n,l} \}_{n = 1}^l$, find $\{ q_j \}_{j = 1}^l$.
\end{ip}

We show that, for the discrete problem, there are also two principal cases, similar to the ones for the continuous problem.

\textbf{Degenerate case.} If $\gcd(m, l + 1) = d > 1$, then a part of the spectrum degenerates:
\begin{equation} \label{degh}
\la_{n,l} = \frac{4 \sin^2\left( \frac{nh}{2}\right)}{h^2}, \quad n = kr, \quad r = \frac{l + 1}{d}, \quad k = \overline{1, d-1}. 
\end{equation}
In this case, solution of Inverse Problem~\ref{ip:disch} is non-unique.

\textbf{Non-degenerate case.} If $\gcd(m, l + 1) = 1$, then the eigenvalues $\{ \la_{n,l} \}_{n = 1}^l$ uniquely specify $\{ q_j \}_{j =1}^l$.

In Section~\ref{sec:disc} of this paper, we discuss additional data for the unique solvability of Inverse Problem~\ref{ip:disch} in the degenerate case, formulate the uniqueness theorems, and develop constructive algorithms for solving the inverse problems for the degenerate and non-degenerate cases. 

In the inverse problem solution, an important role is played by the Chebyshev polynomials of the second kind. The crucial step of the reconstruction of the coefficients $\{ q_j \}$ is finding the coordinates of some characteristic polynomials with respect to the basis of the Chebyshev polynomials. Note that the continuous problem~\eqref{eqv2}-\eqref{bc2} with frozen argument also has a deep relationship with the Chebyshev polynomials (see \cite{TLBCS21}).

In Section~\ref{sec:asympt}, we study the connection between the eigenvalues $\{ \la_n \}_{n = 1}^{\iy}$ of the discrete problem~\eqref{eqv1}-\eqref{bc1} and the eigenvalues $\{ \la_{n,l} \}_{n = 1}^l$ of the continuous problem~\eqref{eqv2}-\eqref{bc2}. For the Sturm-Liouville equation \eqref{StL}, the relationship between the eigenvalues of the continuous problem and its finite-difference approximation has been investigated in \cite{PHA81} and used for development of a numerical algorithm for recovery of the potential $q(x)$ from the eigenvalues in \cite{FKL95}. Note that the methods of~\cite{PHA81, FKL95} cannot be directly applied to the problem~\eqref{eqv1}-\eqref{bc1} because it is non-self-adjoint. For studying the behavior of the eigenvalues $\la_{n,l}$ as $l \to \iy$, we develop three different methods for ``small'', ``middle'', and ``large'' $n$. As a result, we show that $\la_{n,l}$ can be approximated by the continuous problem eigenvalues $\la_n$ with the ``correction terms'': 
$$
\la_{n,l} \approx \la_n - n^2 + \frac{4 \sin^2 \left( \frac{nh}{2}\right)}{h^2}.
$$
The situation is different from the results of \cite{PHA81, FKL95}, since for the standard Sturm-Liouville equation, only a part of the eigenvalues of the corresponding discrete problem have a significantly good approximation by using the ``correction terms''.

In Section~\ref{sec:num}, we use the results of Sections~\ref{sec:disc}-\ref{sec:asympt} for the development of a numerical algorithm for recovering the potential $q(x)$ from a finite set of the eigenvalues $\{ \la_n \}$ of the problem~\eqref{eqv2}-\eqref{bc2}. Although there is a number of numerical methods for solution of the classical inverse Sturm-Liouville problems (see, e.g., \cite{RS92, IY08}), there are only a few results in this direction for nonlocal operators. In particular, a numerical approach to the inverse problem solution was developed in \cite{BB20} for one special class of the second-order integro-differential operators. For functional-differential operators with frozen argument, as far as we know, numerical solution of inverse spectral problems has not been studied yet. In this paper, we make the first steps in this direction. In Section~\ref{sec:num}, we describe a numerical method based on the finite-difference approximation~\eqref{eqv1}-\eqref{bc1} for solving the inverse problem for the Sturm-Liouville operator with frozen argument. Numerical examples confirming the effectiveness of this method are provided.

In Section~\ref{sec:gen}, possible generalizations and applications of our results are discussed.

\section{Solution of the discrete inverse problem} \label{sec:disc}

Let us rewrite the discrete system~\eqref{eqv1}-\eqref{bc1} in the following equivalent form
\begin{gather} \label{eqv3}
    y_{j + 1} + y_{j - 1} - w_j y_m = \mu y_j, \quad j = \overline{1, l}, \\ \label{bc3}
    y_0 = y_{l + 1} = 0,
\end{gather}
where $w_j = h^2 q_j$, $\mu = 2 - h^2 \la$. 

In this section, the inverse problem theory for the eigenvalue problem~\eqref{eqv3}-\eqref{bc3} is developed. We describe the two principal cases: degenerate and non-degenerate. For these two cases, we provide appropriate inverse problems statements, prove uniqueness theorems, and obtain reconstruction algorithms.

Denote by $[P_j(\mu)]_{j = 0}^{l + 1}$ and $[Q_j(\mu)]_{j = 0}^{l + 1}$ the solutions of the system~\eqref{eqv3} satisfying the initial conditions
\begin{equation} \label{icPQ}
P_{m - 1}(\mu) = 1, \quad P_m(\mu) = 0, \quad Q_{m - 1}(\mu) = 0, \quad Q_m(\mu) = 1.
\end{equation}
Obviously, using~\eqref{eqv3} and \eqref{icPQ}, one can readily compute $P_j(\mu)$ and $Q_j(\mu)$ for $j = m-2, m-3, \ldots, 1$ and for $j = m + 1, m + 2, \ldots, l + 1$. 
Below the notation $f(\mu) \sim f_0 \mu^n$ means that $f(\mu)$ is a polynomial of degree $n$ with the leading coefficient $f_0$. One can easily check that
\begin{equation} \label{asymptPQ}
P_0(\mu) \sim \mu^{m-1}, \quad P_{l + 1}(\mu) \sim -\mu^{l - m}, \quad Q_0(\mu) \sim (q_{m-1} - 1) \mu^{m - 2}, \quad Q_{l + 1}(\mu) \sim \mu^{l - m + 1}.
\end{equation}

The eigenvalues of the problem~\eqref{eqv3}-\eqref{bc3} coincide with the zeros of the characteristic function
\begin{equation} \label{char}
D(\mu) = P_0(\mu) Q_{l + 1}(\mu) - P_{l + 1}(\mu) Q_0(\mu).
\end{equation}
It follows from \eqref{asymptPQ} that $D(\mu) \sim \mu^l$.
Hence, the problem \eqref{eqv3}-\eqref{bc3} has exactly $l$ eigenvalues $\{ \mu_n \}_{n = 1}^l$, and
\begin{equation} \label{prod}
    D(\mu) = \prod_{n = 1}^l (\mu - \mu_n).
\end{equation}

Clearly, Inverse Problem~\ref{ip:disch} is equivalent to the following one.

\begin{ip} \label{ip:disc}
Given $\{ \mu_n \}_{n = 1}^l$, find $\{ w_j\}_{j = 1}^l$.
\end{ip}

Proceed with the solution of Inverse Problem~\ref{ip:disc}. Define $\psi_n(\mu)$ by the recurrence relation
$$
\psi_0(\mu) = 0, \quad \psi_1(\mu) = 1, \quad \psi_{n + 1}(\mu) = \mu \psi_n(\mu) - \psi_{n-1}(\mu), \quad n \in \mathbb N.
$$
Clearly, $\psi_n(\mu) = U_{n-1}\left( \frac{\mu}{2}\right)$, where $U_n(\mu)$ are the Chebyshev polynomials of the second kind. Hence 
\begin{equation} \label{sin}
\psi_n(\mu) = \frac{\sin (n \theta)}{\sin \theta}, \quad \mu = 2 \cos \theta,
\end{equation}
$\deg(\psi_n) = n-1$, and the zeros of $\psi_n(\mu)$ are
\begin{equation} \label{rpsi}
2 \cos \left( \frac{\pi k}{n}\right), \quad k = \overline{1, n-1}.
\end{equation}

By induction, one can show that
\begin{gather} \label{Ppsi}
    P_0(\mu) = \psi_m(\mu), \quad P_{l + 1}(\mu) = -\psi_{l - m + 1}(\mu), \\ \label{Qpsi}
    Q_0(\mu) = -\psi_{m-1}(\mu) + \sum_{j = 1}^{m-1} w_j \psi_j(\mu), \quad
    Q_{l + 1}(\mu) = \psi_{l - m + 2}(\mu) + \sum_{j = 1}^{l - m + 1} w_{l + 1 - j} \psi_j(\mu).
\end{gather}
Substituting \eqref{Ppsi}-\eqref{Qpsi} into~\eqref{char}, we obtain
\begin{equation} \label{Dwm}
D(\mu) = \psi_m(\mu) \psi_{l - m + 2}(\mu) + \mu^{l - 1} w_m + O(\mu^{l-m}).
\end{equation}
Hence, given $D(\mu)$, one can find $w_m$ from~\eqref{Dwm}.

\textbf{Non-degenerate case.}
If the numbers $m$ and $(l + 1)$ are relatively prime, that is, $\gcd(m, l + 1) = 1$, then the polynomials $P_0(\mu)$ and $P_{l + 1}(\mu)$ do not have common roots. Substituting their roots
\begin{equation} \label{roots}
\nu_k = 2\cos\left( \frac{\pi k}{m}\right), \quad k = \overline{1, m-1}, \quad \text{and} \quad \theta_k = 2 \cos\left( \frac{\pi k}{l - m + 1}\right), \quad k = \overline{1, l - m},
\end{equation}
into \eqref{char}, we derive
\begin{align} \label{intQ0}
& Q_0(\nu_k) = -P_{l + 1}^{-1}(\nu_k) D(\nu_k), \quad k = \overline{1, m - 1}, \\ \label{intQl}
& Q_{l + 1}(\theta_k) = P_0^{-1}(\theta_k) D(\theta_k), \quad k = \overline{1, l - m}. 
\end{align}
In view of \eqref{Qpsi}, $Q_0(\mu)$ is a polynomial of degree $(m-2)$ and $Q_{l + 1}(\mu)$ is a polynomial of degree $(l - m + 1)$ with the known coefficients at $\mu^{l - m + 1}$ and $\mu^{l - m}$. Therefore, the polynomials $Q_0(\mu)$ and $Q_{l + 1}(\mu)$ can be uniquely constructed by using formulas~\eqref{intQ0}-\eqref{intQl} and interpolation. 

Observe that the Chebyshev polynomials $\{ \psi_j \}_{j = 1}^n$ form a basis in the space $\mathscr P_{n-1}$ of polynomials of degree at most $(n-1)$. Consequently, the numbers $\{ w_j \}_{j = 1}^{m-1}$ can be found from~\eqref{Qpsi} as the coordinates of the polynomial $(Q_0 + \psi_{m - 1}) \in \mathscr P_{m-2}$ with respect to the basis $\{ \psi_j \}_{j = 1}^{m-1}$, and the numbers $\{ w_{l + 1 - j} \}_{j = 1}^{l-m}$, as the coordinates of the polynomial $(Q_{l + 1} - \psi_{l - m + 2} - w_m \psi_{l - m + 1}) \in \mathscr P_{l - m - 1}$ with respect to the basis $\{ \psi_j \}_{j = 1}^{l - m}$. Thus, one can solve Inverse Problem~\ref{ip:disc} by the following algorithm.

\begin{alg} \label{alg:prime}
Suppose that $\gcd(m, l + 1) = 1$ and the eigenvalues $\{ \mu_n \}_{n = 1}^l$ are given. We have to find $\{ w_j \}_{j = 1}^l$.

\begin{enumerate}
    \item Construct $D(\mu)$ by \eqref{prod}.
    \item Determine $w_m$ by using~\eqref{Dwm}.
    \item Find $P_0(\mu)$ and $P_{l + 1}(\mu)$ by~\eqref{Ppsi}.
    \item Calculate $\{ \nu_k \}_{k = 1}^{m-1}$, $\{ \theta_k \}_{k = 1}^{l - m}$ by \eqref{roots} and $\{ Q_0(\nu_k) \}_{k = 1}^{m-1}$, $\{ Q_{l + 1}(\theta_k) \}_{k = 1}^{l-m}$ by \eqref{intQ0}-\eqref{intQl}.
    \item Interpolate the polynomial $Q_0(\mu)$ by its values $\{ Q_0(\nu_k) \}_{k = 1}^{m-1}$.
    \item Interpolate the polynomial $(Q_{l + 1}(\mu) - \mu^{l - m + 1} - w_m \mu^{l - m}) \in \mathscr P_{l-m-1}$ by its values $\{ Q_{l + 1}(\theta_k) - \theta_k^{l - m + 1} - w_m \theta_k^{l - m} \}_{k = 1}^{l-m}$, then find $Q_{l + 1}(\mu)$.
    \item Find the coordinates $\{ w_j \}_{j = 1}^{m-1}$ of the polynomial $(Q_0 + \psi_{m - 1})$ with respect to the basis $\{ \psi_j \}_{j = 1}^{m-1}$.
    \item Find the coordinates $\{ w_{l + 1 - j} \}_{j = 1}^{l-m}$ of the polynomial $(Q_{l + 1} - \psi_{l - m + 2} - w_m \psi_{l - m + 1}) \in \mathscr P_{l - m - 1}$ with respect to the basis $\{ \psi_j \}_{j = 1}^{l - m}$.
\end{enumerate}
Thus, all the elements of $\{ w_j \}_{j = 1}^l$ are found.
\end{alg}

The uniqueness of construction at each step of
Algorithm~\ref{alg:prime} implies the following uniqueness theorem for solution of Inverse Problem~\ref{ip:disc} in the case $\gcd(m, l + 1) = 1$. Along with the problem \eqref{eqv3}-\eqref{bc3} with the coefficients $\{ w_j \}_{j = 1}^l$ and the eigenvalues $\{ \mu_n \}_{n = 1}^l$, we consider another problem of the same form having different coefficients $\{ \tilde w_j \}_{j = 1}^l$ and the eigenvalues $\{ \tilde \mu_j \}_{j = 1}^l$. The parameters $l$ and $m$ are supposed to be the same for these two problems.

\begin{thm}
Suppose that $\gcd(m, l + 1) = 1$ and $\mu_n = \tilde \mu_n$, $n = \overline{1, l}$. Then $w_j = \tilde w_j$, $j = \overline{1, l}$.
\end{thm}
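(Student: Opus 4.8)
The plan is to make precise the sentence preceding the theorem — that every step of Algorithm~\ref{alg:prime} is uniquely determined — by running the algorithm on the two problems in parallel and observing that each quantity it produces depends only on the shared input, namely the multiset of eigenvalues and the fixed parameters $l$ and $m$. Since $\mu_n=\tilde\mu_n$ for all $n$ and the tilded problem has the same $l,m$, all these quantities coincide, and in particular so do the coefficients.

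Concretely, I would first note $D(\mu)=\prod_{n=1}^{l}(\mu-\mu_n)=\prod_{n=1}^{l}(\mu-\tilde\mu_n)=\tilde D(\mu)$ by~\eqref{prod}. Since $\psi_m\psi_{l-m+2}$ in~\eqref{Dwm} is fixed by $l$ and $m$, comparing coefficients at $\mu^{l-1}$ gives $w_m=\tilde w_m$. By~\eqref{Ppsi} the polynomials $P_0=\psi_m$ and $P_{l+1}=-\psi_{l-m+1}$ depend only on $l,m$, hence $P_0=\tilde P_0$ and $P_{l+1}=\tilde P_{l+1}$. Next I would check, via~\eqref{rpsi} and~\eqref{roots}, that under $\gcd(m,l+1)=1$ the node sets $\{\nu_k\}_{k=1}^{m-1}$ and $\{\theta_k\}_{k=1}^{l-m}$ are disjoint: a common value forces $k(l-m+1)=k'm$ with $1\le k\le m-1$, and since $\gcd(m,l-m+1)=\gcd(m,l+1)=1$ this yields $m\mid k$, a contradiction. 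Therefore $P_{l+1}$ is nonzero at every $\nu_k$ and $P_0$ is nonzero at every $\theta_k$, so the right-hand sides of~\eqref{intQ0}--\eqref{intQl} make sense and, by the equalities already established, $Q_0(\nu_k)=\tilde Q_0(\nu_k)$ for $k=\overline{1,m-1}$ and $Q_{l+1}(\theta_k)=\tilde Q_{l+1}(\theta_k)$ for $k=\overline{1,l-m}$.

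I would then recover the two polynomials by interpolation, as in steps 5--6 of the algorithm. By~\eqref{Qpsi}, $\deg Q_0=m-2$, so its $m-1$ values at the distinct nodes $\nu_k$ determine it uniquely, whence $Q_0=\tilde Q_0$. The polynomial $Q_{l+1}$ has degree $l-m+1$, but from~\eqref{Qpsi} its coefficients at $\mu^{l-m+1}$ and $\mu^{l-m}$ are $1$ and $w_m$ respectively (both already known), so $Q_{l+1}(\mu)-\mu^{l-m+1}-w_m\mu^{l-m}\in\mathscr P_{l-m-1}$ is uniquely determined by its $l-m$ values at the $\theta_k$; hence $Q_{l+1}=\tilde Q_{l+1}$. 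Finally, since $\{\psi_j\}_{j=1}^{n}$ is a basis of $\mathscr P_{n-1}$, the identities in~\eqref{Qpsi}
\[
Q_0 + \psi_{m-1} = \sum_{j=1}^{m-1} w_j\psi_j, \qquad Q_{l+1} - \psi_{l-m+2} - w_m\psi_{l-m+1} = \sum_{j=1}^{l-m} w_{l+1-j}\psi_j,
\]
exhibit $\{w_j\}_{j=1}^{m-1}$ and $\{w_{l+1-j}\}_{j=1}^{l-m}=\{w_j\}_{j=m+1}^{l}$ as the unique coordinate vectors of known polynomials in fixed bases, so $w_j=\tilde w_j$ for all $j\ne m$ as well; together with $w_m=\tilde w_m$ this finishes the proof.

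I do not expect a genuine obstacle here, since the content is precisely the step-by-step uniqueness of Algorithm~\ref{alg:prime}. The only points requiring a little care are the disjointness of $\{\nu_k\}$ and $\{\theta_k\}$ under $\gcd(m,l+1)=1$, which is what makes the interpolation data in~\eqref{intQ0}--\eqref{intQl} available, and the bookkeeping showing that the top two coefficients of $Q_{l+1}$ are known a priori, so that $l-m$ nodes suffice to pin down a polynomial of degree $l-m+1$.
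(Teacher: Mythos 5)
Your proposal is correct and follows exactly the route the paper intends: the paper derives the theorem as an immediate consequence of the step-by-step uniqueness of Algorithm~\ref{alg:prime}, and you simply carry out that parallel run in detail (including the verification, via $\gcd(m,l-m+1)=\gcd(m,l+1)=1$, that $P_0$ and $P_{l+1}$ have no common roots, which the paper asserts without proof). No gaps; the argument matches the paper's.
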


Below we show that the condition $\gcd(m, l + 1) = 1$ is not only sufficient but also necessary for the uniqueness of solution of Inverse Problem~\ref{ip:disc}.

\textbf{Degenerate case.}
Consider the case $\gcd(m, l + 1) = d > 1$. Then the polynomials $P_0(\mu)$ and $P_{l + 1}(\mu)$ have the common roots $\xi_k = 2 \cos \frac{\pi k}{d}$, $k = \overline{1, d-1}$, being the roots of $\psi_d(\mu)$. It follows from~\eqref{char} that $\{ \xi_k \}_{k = 1}^{d-1}$ are also roots of $D(\mu)$ and eigenvalues of the problem~\eqref{eqv3}-\eqref{bc3}: $\mu_{kr} = \xi_k$, $k = \overline{1, d-1}$, $r = \frac{l + 1}{d}$. Thus, relation~\eqref{char} can be represented in the form
\begin{equation} \label{char2}
D(\mu) = \psi_d(\mu) (\vv_{m - d}(\mu) Q_{l + 1}(\mu) + \vv_{l - m + 1 - d}(\mu) Q_0(\mu)),
\end{equation}
where 
\begin{equation} \label{defvv}
\vv_{m-d}(\mu) := \frac{\psi_{m}(\mu)}{\psi_d(\mu)} \quad \text{and} \quad \vv_{l - m + 1 - d}(\mu) := \frac{\psi_{l - m + 1 - d}(\mu)}{\psi_d(\mu)}
\end{equation}
are relatively prime polynomials of degrees $(m - d)$ and $(l - m + 1 - d)$, respectively. Clearly, the degenerate part of the spectrum $\{ \mu_{kr} \}_{k = 1}^{d-1}$ carries no information about $\{ w_j \}_{j = 1}^l$. It is also clear that the $(l - d + 1)$ remaining eigenvalues $\{ \mu_n \}_{n = 1}^l \setminus \{ \mu_{kr} \}_{k = 1}^{d-1}$ are insufficient for the unique reconstruction of the $l$ unknown values $\{ w_j \}_{j = 1}^l$. Therefore, instead of Inverse Problem~\ref{ip:disc}, we consider the following two partial inverse problems.

\begin{ip} \label{ip:disc1}
Suppose that the numbers $\{ w_j \}_{j = m - d + 1}^{m - 1}$ are known a priori. Given the eigenvalues $\{ \mu_n \}_{n = 1}^l \setminus \{ \mu_{kr} \}_{k = 1}^{d-1}$, find $\{ w_j \}_{j = 1}^l \backslash \{ w_j \}_{j = m - d + 1}^{m - 1}$.
\end{ip}

\begin{ip} \label{ip:disc2}
Suppose that the numbers $\{ w_j \}_{j = m + 1}^{m + d}$ are known a priori. Given the eigenvalues $\{ \mu_n \}_{n = 1}^l \setminus \{ \mu_{kr} \}_{k = 1}^{d-1}$, find $\{ w_j \}_{j = 1}^l \backslash \{ w_j \}_{j = m + 1}^{m + d}$. 
\end{ip}

Suppose that the conditions of Inverse Problem~\ref{ip:disc1} are fulfilled.
Denote by $\{ \nu_k \}_{k = 1}^{m - d}$ the roots of the polynomial $\vv_{m-d}(\mu)$. These roots can be easily found by using~\eqref{rpsi} and~\eqref{defvv}. Using~\eqref{char} and~\eqref{char2}, we obtain
$$
Q_0(\nu_k) = -P^{-1}_{l + 1}(\nu_k) D(\nu_k), \quad k = \overline{1, m - d}.
$$
Define the polynomial
\begin{equation} \label{Q0b}
Q_0^{\bullet}(\mu) = Q_0(\mu) + \psi_{m-1}(\mu) - \sum_{j = m-d}^{m-1} w_j \psi_j(\mu).
\end{equation}
Clearly, $\deg(Q_0^{\bullet}) \le (m - d - 1)$ and
\begin{equation} \label{Q0bul}
Q_0^{\bullet}(\nu_k) = -P^{-1}_{l + 1}(\nu_k) D(\nu_k) + \psi_{m-1}(\nu_k) - \sum_{j = m-d}^{m-1} w_j \psi_j(\nu_k), \quad k = \overline{1, m - d}.
\end{equation}
Therefore, one can use interpolation to find $Q_0^{\bullet}(\mu)$ by its values \eqref{Q0bul}, and after that find $Q_0(\mu)$ from \eqref{Q0b}. Further, using~\eqref{char}, we find 
\begin{equation} \label{findQl}
Q_{l + 1}(\mu) = P_0^{-1}(\mu) (D(\mu) + P_{l + 1}(\mu) Q_0(\mu)).
\end{equation}
Now the unknown values $\{ w_j \}_{j = 1}^l \backslash \{ w_j \}_{j = m - d + 1}^{m - 1}$ can be found analogously to Algorithm~\ref{alg:prime}. Thus, we arrive at the following algorithm for solution of Inverse Problem~\ref{ip:disc1}.

\begin{alg} \label{alg:part}
Suppose that $\gcd(m, l + 1) = d > 1$ and the numbers $\{ w_j \}_{j = m - d + 1}^{m - 1}$, $\{ \mu_n \}_{n = 1}^l \setminus \{ \mu_{kr} \}_{k = 1}^{d-1}$ are given. We have to find $\{ w_j \}_{j = 1}^l \backslash \{ w_j \}_{j = m - d + 1}^{m - 1}$.

\begin{enumerate}
    \item Add $\mu_{kr} = 2 \cos \frac{\pi k}{d}$, $k = \overline{1, d-1}$, $r = \frac{l + 1}{d}$, to the set of the given eigenvalues and construct the polynomial $D(\mu)$ by~\eqref{prod}.
    Determine $w_m$ by using~\eqref{Dwm}.
    \item Find $P_0(\mu)$ and $P_{l + 1}(\mu)$ by \eqref{Ppsi}.
    \item Find the polynomial $\vv_{m-d}(\mu)$ by \eqref{defvv} and its roots $\{ \nu_k \}_{k = 1}^{m - d}$.
    \item Using the given values $\{ w_j \}_{j = m - d + 1}^{m - 1}$, calculate $\{ Q_0^{\bullet}(\nu_k) \}_{k = 1}^{m - d}$ by \eqref{Q0bul}.
    \item Interpolate the polynomial $Q_0^{\bullet}(\mu) \in \mathscr P_{m - d - 1}$ by its values $\{ Q_0^{\bullet}(\nu_k) \}_{k = 1}^{m - d}$.
    \item Find $Q_0(\mu)$ from \eqref{Q0b}.
    \item Construct $Q_{l + 1}(\mu)$ by \eqref{findQl}.
    \item Implement steps~7-8 of Algorithm~\ref{alg:prime} to
    find the unknown values among $\{ w_j \}_{j = 1}^l$. 
\end{enumerate}
\end{alg}

Inverse Problem~\ref{ip:disc2} can be solved analogously. The constructive solutions of Inverse Problems~\ref{ip:disc1}-\ref{ip:disc2} imply the following uniqueness theorem.

\begin{thm} \label{thm:uniqd}
Suppose that $\gcd(m, l + 1) = d > 1$. Then a part of the spectrum of the problem~\eqref{eqv3}-\eqref{bc3} degenerates: $\mu_{kr} = 2 \cos \frac{\pi k}{d}$, $k = \overline{1, d-1}$, $r = \frac{l + 1}{d}$.

(i) If $w_j = \tilde w_j$ for $j = \overline{m - d + 1, m - 1}$ and $\mu_n = \tilde \mu_n$ for $n = \overline{1, l}$, then $w_j = \tilde w_j$, $j = \overline{1, l}$.

(ii) If $w_j = \tilde w_j$ for $j = \overline{m + 1, m + d}$ and $\mu_n = \tilde \mu_n$ for $n = \overline{1, l}$, then $w_j = \tilde w_j$, $j = \overline{1, l}$.
\end{thm}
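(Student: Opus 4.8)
The plan is to derive both halves of the statement from material already established: the degeneration assertion is precisely what was observed in the lines leading to \eqref{char2}, and the two uniqueness claims follow by checking that Algorithm~\ref{alg:part} (for part~(i)) and the mirror algorithm solving Inverse Problem~\ref{ip:disc2} (for part~(ii)) produce, at every step, an object that is uniquely determined by the prescribed data. First I would recall that $\gcd(m,l+1)=d>1$ forces $d\mid m$ and $d\mid(l-m+1)$, so by the representation \eqref{sin} the polynomial $\psi_d$ divides both $P_0=\psi_m$ and $P_{l+1}=-\psi_{l-m+1}$ (indeed $\gcd(\psi_m,\psi_{l-m+1})=\psi_{\gcd(m,l-m+1)}=\psi_d$); hence the $d-1$ numbers $2\cos\frac{\pi k}{d}$ are common zeros of $P_0$ and $P_{l+1}$, and by \eqref{char} they are zeros of $D$, i.e. eigenvalues of \eqref{eqv3}-\eqref{bc3}. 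This yields the degeneration $\mu_{kr}=2\cos\frac{\pi k}{d}$ together with the factorization \eqref{char2} exploited below.

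For part~(i) I would trace through Algorithm~\ref{alg:part}, observing that each operation is a single-valued function of its inputs. Since $d$ depends only on $m$ and $l+1$, the degenerate eigenvalues $2\cos\frac{\pi k}{d}$ are known a priori; adjoining them to the given set recovers the full multiset $\{\mu_n\}_{n=1}^{l}$, hence the monic polynomial $D(\mu)=\prod_{n=1}^{l}(\mu-\mu_n)$ by \eqref{prod}, and then $w_m$ is read off from \eqref{Dwm}. The polynomials $P_0$, $P_{l+1}$ and $\vv_{m-d}$ depend only on $m$ and $l$, so the nodes $\{\nu_k\}_{k=1}^{m-d}$ are fixed; using the a priori data $\{w_j\}_{j=m-d+1}^{m-1}$, the values $\{Q_0^{\bullet}(\nu_k)\}$ of \eqref{Q0bul} are determined, and since $\deg Q_0^{\bullet}\le m-d-1$ while the $\nu_k$ are $m-d$ distinct points, interpolation recovers $Q_0^{\bullet}$ uniquely, hence $Q_0$ via \eqref{Q0b}. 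Identity \eqref{findQl} then determines $Q_{l+1}$ uniquely, the division by $P_0$ being exact because $D+P_{l+1}Q_0=P_0Q_{l+1}$ holds as polynomials. Finally, since $\{\psi_j\}_{j=1}^{n}$ is a basis of $\mathscr P_{n-1}$, the expansions \eqref{Qpsi} pin down the remaining $\{w_j\}_{j=1}^{m-1}$ and $\{w_{l+1-j}\}_{j=1}^{l-m}$ as unique coordinate vectors. Running this same chain of implications for the tilde-problem, which by hypothesis has the same $l$, $m$, the same prescribed $\{w_j\}_{j=m-d+1}^{m-1}$ and the same eigenvalues, forces $\tilde w_j=w_j$ for every $j$.

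Part~(ii) follows from the index symmetry $j\mapsto l+1-j$ (equivalently $m\mapsto l+1-m$), which interchanges Inverse Problem~\ref{ip:disc1} with Inverse Problem~\ref{ip:disc2} and Algorithm~\ref{alg:part} with the mirror algorithm mentioned after it; the verbatim step-by-step uniqueness argument then applies, with $\{w_j\}_{j=m+1}^{m+d}$ supplying the a priori data.

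I expect the only genuine work to be bookkeeping rather than conceptual: one must verify that in the interpolation step the degree bound $\deg Q_0^{\bullet}\le m-d-1$ is matched by the $m-d$ distinct nodes $\nu_k$; that $P_{l+1}$ does not vanish at those nodes, so that \eqref{Q0bul} is meaningful (true because each $\nu_k$ is a root of $\psi_m$ but not of $\psi_d=\gcd(\psi_m,\psi_{l-m+1})$); and that the division by $P_0$ in \eqref{findQl} is exact. These are precisely the places where the $d-1$ a priori values of $w_j$ and the coprimality of $\vv_{m-d},\vv_{l-m+1-d}$ from \eqref{defvv} earn their keep, compensating for the $d-1$ useless degenerate eigenvalues. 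Granting these routine checks, the theorem is immediate from the constructive algorithms.
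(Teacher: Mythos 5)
Your proposal matches the paper's proof: the paper likewise obtains the degeneration from the common roots of $P_0=\psi_m$ and $P_{l+1}=-\psi_{l-m+1}$ (the roots of $\psi_d$, since $\gcd(m,l+1)=d$) and then simply states that the theorem follows from the constructive solutions of Inverse Problems~\ref{ip:disc1}--\ref{ip:disc2}, i.e.\ from Algorithm~\ref{alg:part} and its mirror --- exactly the step-by-step ``each stage is single-valued'' check you carry out. The only caveat is notational: \eqref{Q0b}--\eqref{Q0bul} as printed sum from $j=m-d$, which would require $w_{m-d}$ beyond the a priori data, but your degree count ($\deg Q_0^{\bullet}\le m-d-1$ against the $m-d$ nodes $\nu_k$) shows you are working with the intended version in which the sum starts at $j=m-d+1$.
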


Note that the condition $\mu_n = \tilde \mu_n$, $n = \overline{1, l}$, in (i) and (ii) can be replaced by $\mu_n = \tilde \mu_n$, $n \in \{ 1, \ldots, l \} \setminus \{ kr \}_{k = 1}^{d-1}$, since the degenerate eigenvalues automatically coincide.
Obviously, the spectrum degeneration described in Theorem~\ref{thm:uniqd} implies~\eqref{degh} for the eigenvalues of \eqref{eqv1}-\eqref{bc1}.

\begin{example} \label{ex:2}
Consider the special case $l = 2m-1$, which corresponds to $a = \frac{\pi}{2}$ in the continuous problem~\eqref{eqv2}-\eqref{bc2}, that is, the frozen argument is in the middle of the interval. Then, $d = m$ and formula~\eqref{char2} takes the form
\begin{equation} \label{relD2}
D(\mu) = \psi_m(\mu) (Q_0(\mu) + Q_{l + 1}(\mu)).
\end{equation}
Hence, $\mu_n = \frac{\pi n}{2m}$ for even $n$ and
\begin{equation} \label{Q0lprod}
Q_0(\mu) + Q_{l + 1}(\mu) = \prod_{\substack{n = 1 \\ n \: \text{is odd}}}^l (\mu - \mu_n).
\end{equation}
On the other hand, relations~\eqref{Qpsi} imply
\begin{equation} \label{Q0l}
Q_0(\mu) + Q_{l + 1}(\mu) = \psi_{m + 1}(\mu) - \psi_{m-1}(\mu) + w_m \psi_m(\mu) + \sum_{j = 1}^{m-1} (w_j + w_{l + 1 - j}) \psi_j(\mu).
\end{equation}
Thus, given the eigenvalues $\{ \mu_n \}$ with odd indices $n$, one can construct the polynomial $(Q_0(\mu) + Q_{l + 1}(\mu))$ by \eqref{Q0lprod} and find the coordinates $\{ w_j + w_{l + 1 - j} \}_{j  = 1}^{m-1}$, $w_m$ of the polynomial 
$$
(Q_0 + Q_{l + 1} - \psi_{m + 1} + \psi_{m-1}) \in \mathscr P_{m-1}
$$
with respect to the basis $\{ \psi_j \}_{j = 1}^m$. Obviously, it is impossible to uniquely determine the coefficients $\{ w_j \}_{j = 1}^l \setminus \{ w_m \}$. However, the uniqueness of the inverse problem solution holds in the symmetric case and if $\{ w_j \}_{j = 1}^{m-1}$ or $\{ w_j \}_{j = m + 1}^l$ are known a priori.
\end{example}

\begin{thm} \label{thm:uniq2}
Suppose that $l = 2m-1$ and one of the following conditions (i)-(iii) is fulfilled: 

(i) $w_j = w_{l + 1 - j}$, $\tilde w_j = \tilde w_{l + 1 - j}$, $j = \overline{1, m-1}$;

(ii) $w_j = \tilde w_j$, $j = \overline{1, m-1}$;

(iii) $w_j = \tilde w_j$, $j = \overline{m + 1, l}$.

Then, it follows from $\mu_n = \tilde \mu_n$, $n = \overline{1, l}$, that $w_j = \tilde w_j$, $j = \overline{1, l}$.
\end{thm}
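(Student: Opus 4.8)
The plan is to build directly on the computation in Example~\ref{ex:2}. Since $l = 2m-1$, we have $l+1 = 2m$, hence $d = \gcd(m, l+1) = m$ and $r = (l+1)/d = 2$; the degenerate eigenvalues are $\mu_{2k} = 2\cos\frac{\pi k}{m}$, $k = \overline{1, m-1}$, and the factorization~\eqref{relD2} holds, $D(\mu) = \psi_m(\mu)\,(Q_0(\mu) + Q_{l+1}(\mu))$, together with its analogue for the $\tilde{\phantom{w}}$-problem.

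First I would extract from the coincidence of the spectra the part of the coefficient data that is always recoverable. From $\mu_n = \tilde\mu_n$, $n = \overline{1,l}$, and~\eqref{prod} we get $D \equiv \tilde D$; dividing~\eqref{relD2} and its $\tilde{\phantom{w}}$-analogue by the nonzero polynomial $\psi_m$ yields $Q_0 + Q_{l+1} = \tilde Q_0 + \tilde Q_{l+1}$. Expanding both sides via~\eqref{Q0l} and using that $\{\psi_j\}_{j=1}^{m}$ is a basis of $\mathscr P_{m-1}$, so that the polynomial $Q_0 + Q_{l+1} - \psi_{m+1} + \psi_{m-1}$ (of degree $\le m-1$) has uniquely determined coordinates in this basis, comparison of coefficients gives
$$
w_m = \tilde w_m, \qquad w_j + w_{l+1-j} = \tilde w_j + \tilde w_{l+1-j}, \quad j = \overline{1, m-1}.
$$

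It then remains to combine these identities with each of the three hypotheses. Under (ii), the equalities $w_j = \tilde w_j$ for $j = \overline{1, m-1}$ together with the sum identities force $w_{l+1-j} = \tilde w_{l+1-j}$ for $j = \overline{1, m-1}$; since $l+1-j$ runs over $\{m+1, \ldots, l\}$ as $j$ runs over $\{1, \ldots, m-1\}$, and $w_m = \tilde w_m$, all indices are covered. Case (iii) is the mirror image: now $w_{l+1-j} = \tilde w_{l+1-j}$ is the given data and the sum identities yield $w_j = \tilde w_j$ for $j = \overline{1, m-1}$. Under (i), the symmetry relations turn the sum identities into $2w_j = 2\tilde w_j$, hence $w_j = \tilde w_j$ for $j = \overline{1, m-1}$, and then $w_{l+1-j} = w_j = \tilde w_j = \tilde w_{l+1-j}$; together with $w_m = \tilde w_m$ this finishes the argument.

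I do not expect a genuine obstacle, as the heavy lifting is already contained in~\eqref{relD2}–\eqref{Q0l}; the only points needing care are that $\psi_m \not\equiv 0$ (so the cancellation in the second step is legitimate), that $\{\psi_j\}_{j=1}^m$ is indeed a basis of $\mathscr P_{m-1}$ (so the coordinates are unique), and that the substitution $j \mapsto l+1-j$ in the symmetry and ``known data'' hypotheses makes the known and unknown halves of $\{w_j\}$ match exactly the two halves occurring in the sum identities. As in the remark after Theorem~\ref{thm:uniqd}, one may also note that the degenerate eigenvalues $\mu_{2k}$ coincide automatically, so $\mu_n = \tilde\mu_n$ for $n = \overline{1,l}$ can be weakened to equality only for the odd-index eigenvalues.
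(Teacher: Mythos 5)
Your proposal is correct and follows essentially the same route as the paper: Theorem~\ref{thm:uniq2} is derived there directly from Example~\ref{ex:2}, i.e.\ from the factorization~\eqref{relD2} and the Chebyshev expansion~\eqref{Q0l}, which determine $w_m$ and the sums $w_j + w_{l+1-j}$ uniquely from the spectrum, after which each of the hypotheses (i)--(iii) resolves the remaining ambiguity exactly as you describe.
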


\section{Eigenvalue asymptotics} \label{sec:asympt}

In this section, we study the connection between the eigenvalues of the continuous problem~\eqref{eqv2}-\eqref{bc2} and its finite-difference approximation~\eqref{eqv1}-\eqref{bc1}. For definiteness, we consider the case $a = \frac{\pi}{2}$ corresponding to $l = 2m-1$. The other cases can be studied similarly. We improve the asymptotics of the eigenvalues $\{ \la_n \}$ obtained in \cite{BBV19, HBY20} and
investigate the behavior of $\la_{n,l}$ as $l \to \iy$. 
Our methods differ from the ones used for the finite-difference approximation of the Sturm-Liouville equation \cite{PHA81}. Our approach is based on the expansion~\eqref{Q0l} by the Chebyshev polynomials. Three different techniques are developed for ``small'', ``middle'', and ``large'' indices $n$. As a result, the ``correction terms'' for approximation of $\la_{n,l}$ by $\la_n$ are obtained.

For the continuous problem \eqref{eqv2}-\eqref{bc2}, we briefly recall the results of~\cite{BBV19}. Equation~\eqref{eqv2} has the solutions
\begin{equation} \label{defCS}
    C(x, \la) := \cos (\rho(x - a)) + \int_a^x q(t) \frac{\sin (\rho(x - t))}{\rho} \, dt, \quad S(x, \la) := \frac{\sin (\rho(x - a))}{\rho},
\end{equation}
where $\rho = \sqrt{\la}$. Note that the definitions of $C(x, \la)$ and $S(x, \la)$ do not depend on the choice of the square root branch. The eigenvalues $\{ \la_n \}_{n = 1}^{\iy}$ of the problem~\eqref{eqv2}-\eqref{bc2} coincide with the zeros of the characteristic function
\begin{equation} \label{defDelta}
\Delta(\la) = \begin{vmatrix} C(0, \la) & S(0, \la) \\ C(\pi,\la) & S(\pi, \la) \end{vmatrix}.
\end{equation}
Substituting~\eqref{defCS} into~\eqref{defDelta}, we derive the relation
$$
\Delta(\la) = \frac{1}{\rho} \sin \frac{\rho \pi}{2} \left( 2 \cos \frac{\rho \pi}{2} + \int_0^{\pi/2} p(t) \frac{\sin (\rho t)}{\rho}\, dt \right),
$$
where $p(t) := q(t) + q(\pi - t)$. Obviously, the spectrum of \eqref{eqv2}-\eqref{bc2} consists of the degenerate eigenvalues $\la_{2n} = (2n)^2$, $n \in \mathbb N$, and of the non-degenerate eigenvalues $\la_n = \rho_n^2$ with odd $n$, where $\{ \rho_n \}$ are the zeros of the function
\begin{equation} \label{defR}
R(\rho) := 2 \cos \frac{\rho \pi}{2} + \int_0^{\pi/2} p(t) \frac{\sin (\rho t)}{\rho}\, dt
\end{equation}
such that $\arg \rho_n \in \left[-\frac{\pi}{2}, \frac{\pi}{2}\right)$. Using \eqref{defR}, we improve the asymptotics~\eqref{asymptla0}. 

\begin{lem} \label{lem:cont}
Suppose that $q \in L_2(0, \pi)$ and $a = \frac{\pi}{2}$. Then the eigenvalues $\{ \la_n \}$ of the boundary value problem~\eqref{eqv2}-\eqref{bc2} with odd $n$ have the asymptotics
\begin{equation} \label{asymptlac}
    \la_n = n^2 + \frac{2 \sin \frac{n\pi}{2}}{\pi} \int_0^{\pi/2} p(t) \sin (n t) \, dt + \frac{\varkappa_n}{n}.
\end{equation}
\end{lem}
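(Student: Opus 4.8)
The plan is to extract the refined asymptotics directly from the equation $R(\rho_n) = 0$ by a bootstrap argument based on the already-known rough asymptotics. First I would recall that, by \eqref{asymptla0} (established in \cite{BBV19, HBY20}), each non-degenerate zero with odd $n$ can be written as $\rho_n = n + \delta_n$ with $\delta_n = O(1/n)$; the branch normalization $\arg\rho_n \in [-\tfrac{\pi}{2}, \tfrac{\pi}{2})$ makes this identification unambiguous for large $n$. Since \eqref{asymptlac} only asserts a property of the sequence $\{\varkappa_n\}$ modulo finitely many terms, it suffices to treat sufficiently large odd $n$. For such $n$ one has $\cos\frac{n\pi}{2} = 0$ and $\sin\frac{n\pi}{2} = (-1)^{(n-1)/2} = \pm 1$, so the addition formula gives
\[
2\cos\frac{\rho_n \pi}{2} = -2\sin\frac{n\pi}{2}\,\sin\frac{\pi\delta_n}{2} = -\pi\delta_n\sin\frac{n\pi}{2} + O(\delta_n^3).
\]

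Next I would expand the integral term in \eqref{defR}. Using $\frac{1}{\rho_n} = \frac{1}{n} + O(\delta_n/n^2)$ together with $\sin(\rho_n t) - \sin(nt) = 2\cos\big((n+\tfrac{\delta_n}{2})t\big)\sin\frac{\delta_n t}{2} = \delta_n t\cos(nt) + O(\delta_n^2)$ uniformly on $[0,\pi/2]$, I would obtain
\[
\int_0^{\pi/2} p(t)\frac{\sin(\rho_n t)}{\rho_n}\,dt = \frac{\beta_n}{n} + \frac{\delta_n\gamma_n}{n} + O\!\left(\frac{1}{n^3}\right),
\]
where $\beta_n := \int_0^{\pi/2} p(t)\sin(nt)\,dt$ and $\gamma_n := \int_0^{\pi/2} t\,p(t)\cos(nt)\,dt$. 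Extending $p$ and $t\,p(t)$ by zero to $(0,\pi)$, these are, up to a constant factor, Fourier coefficients of $L_2(0,\pi)$-functions, hence $\{\beta_n\}, \{\gamma_n\} \in l_2$; in particular $\delta_n\gamma_n/n = O(\gamma_n/n^2)$.

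Substituting both expansions into $R(\rho_n) = 0$, absorbing $O(\delta_n^3) = O(1/n^3)$, and dividing by $\pi\sin\frac{n\pi}{2}$ (legitimate since $\sin\frac{n\pi}{2} = \pm 1 \ne 0$, with $\frac{1}{\sin\frac{n\pi}{2}} = \sin\frac{n\pi}{2}$), I would solve for $\delta_n$:
\[
\delta_n = \frac{\sin\frac{n\pi}{2}}{\pi n}\,\beta_n + O\!\left(\frac{\gamma_n}{n^2}\right) + O\!\left(\frac{1}{n^3}\right).
\]
Finally, from $\la_n = \rho_n^2 = n^2 + 2n\delta_n + \delta_n^2$ and $\delta_n^2 = O(1/n^2)$ I would get
\[
\la_n = n^2 + \frac{2\sin\frac{n\pi}{2}}{\pi}\,\beta_n + O\!\left(\frac{\gamma_n}{n}\right) + O\!\left(\frac{1}{n^2}\right),
\]
which is precisely \eqref{asymptlac} with $\varkappa_n := n\big(\la_n - n^2 - \tfrac{2}{\pi}\sin\tfrac{n\pi}{2}\,\beta_n\big) = O(\gamma_n) + O(1/n) \in l_2$.

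I expect the main obstacle to be purely the bookkeeping: verifying that every remainder produced along the way is genuinely of the form $c_n/n$ with $\{c_n\} \in l_2$ over odd $n$. The two ingredients that make this go through are the $l_2$-membership of Fourier coefficients of $L_2$-functions (applied to $p$ and to $t\,p(t)$) and the a priori estimate $\delta_n = O(1/n)$ borrowed from \eqref{asymptla0}, used to bound the products $\delta_n^2$, $\delta_n^3$, and $\delta_n\gamma_n$. A minor point to handle cleanly is that all the $O(\cdot)$-constants can be taken uniform in $n$ because the factor $\sin\frac{n\pi}{2}$ multiplying $\delta_n$ has modulus exactly $1$, so the division step introduces no blow-up.
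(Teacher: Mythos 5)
Your proof is correct and takes essentially the same route as the paper's: substitute the a priori asymptotics $\rho_n = n + \delta_n$ into $R(\rho_n)=0$, linearize $\cos\frac{\rho_n\pi}{2}$ using $\cos\frac{n\pi}{2}=0$, and identify the leading term of the integral. The only (harmless) difference is in the remainder bookkeeping: the paper uses the stronger a priori form $\delta_n = \varkappa_n/n$ with $\{\varkappa_n\}\in l_2$ to conclude $O(n^{-1}\delta_n)=\varkappa_n/n^2$ at once, whereas you start from the weaker $\delta_n=O(1/n)$ and compensate by expanding the integral one order further, invoking the $l_2$-membership of the Fourier coefficients $\gamma_n$ of $t\,p(t)$ — both are valid.
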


Here and below, the notation $\{ \varkappa_n \}$ is used for various sequences from $l_2$.

\begin{proof}
Theorem~3.1 from~\cite{BBV19} implies that, for $q \in L_2(0, \pi)$ and odd $n$, $\la_n = \rho_n^2$, where
\begin{equation} \label{rhon}
\rho_n = n + \eps_n, \quad \eps_n = \frac{\varkappa_n}{n}.
\end{equation}
Substitute \eqref{rhon} into the equality $R(\rho_n) = 0$. Calculations show that
\begin{align*}
\cos \left( \frac{(n + \eps_n) \pi}{2} \right) & = -\sin \frac{n\pi}{2} \sin \frac{\eps_n \pi}{2} = - \frac{\eps_n \pi }{2} \sin\frac{n\pi}{2} + O(\eps_n^3), \\
\sin ((n + \eps_n) t) & = \sin (nt) (1 + O(\eps_n^2)) + \cos (nt) O(\eps_n) = \sin (nt) + O(\eps_n),
\end{align*}
uniformly with respect to $t \in [0, \pi/2]$. Hence
\begin{equation} \label{epsn}
\eps_n \pi \sin \frac{n \pi}{2} = \frac{1}{n}\int_0^{\pi/2} p(t) \sin (nt) \, dt + O(n^{-1} \eps_n).
\end{equation}
This together with~\eqref{rhon} imply
$$
\eps_n = \frac{\sin \frac{n\pi}{2}}{\pi n} \int_0^{\pi/2} p(t) \sin (nt) \, dt + \frac{\varkappa_n}{n^2}.
$$
Consequently, we arrive at relation~\eqref{asymptlac} for $\la_n = (n + \eps_n)^2$.
\end{proof}

Now return to the discrete inverse problem which was considered in Example~\ref{ex:2}. Since $l = 2m-1$, the eigenvalues $\la_{n,l}$ for even $n$ degenerate (see \eqref{degh}). Therefore, it is worth considering only $\la_{n,l}$ for odd $n$. Using the change of variables $\la = \frac{2 - \mu}{h^2}$, $\mu = 2 \cos \theta$, $w_j = h^2 q_j$ and relations~\eqref{sin}, \eqref{relD2}, \eqref{Q0l}, we conclude that $\la_{n,l} = \frac{2 - 2 \cos \theta_{n,l}}{h^2}$, where $\theta_{n,l}$ are the roots of the equation
\begin{equation} \label{cosd}
2 \cos (m \theta) = -h^2 q_m \frac{\sin (m \theta)}{\sin \theta} - h^2 \sum_{j = 1}^{m-1} (q_j + q_{l + 1 - j}) \frac{\sin (j\theta)}{\sin \theta}.
\end{equation}
For brevity, introduce the notations $p_j := q_j + q_{l + 1 - j}$ and
$$
\sum_{j = 0}^m{\vphantom{\sum}}' a_j = \frac{1}{2} a_0 + \sum_{j =1}^{m-1} a_j + \frac{1}{2} a_m.
$$
Obviously, $p_j = p(x_j)$, $p_m = 2 q_m$. Now equation~\eqref{cosd} can be rewritten in the form
\begin{equation} \label{eqcos}
2 \cos (m \theta) = -\frac{h^2}{\sin \theta} \sum_{j = 0}^m{\vphantom{\sum}}' p_j \sin (j \theta).
\end{equation}

Let us consider the behavior of the roots of equation~\eqref{eqcos} as $l = 2m-1$ tends to $\infty$ and obtain the asymptotics for the eigenvalues $\la_{n,l}$. We will use different techniques in the three different cases.

\textbf{``Small'' $n$.} Consider equation~\eqref{eqcos} in the circle $|\theta| \le C_0 h$, where $C_0 > 0$ is an arbitrary fixed constant. Note that $\sin (j \theta) = \sin \left( \frac{\theta}{h} x_j\right)$, where the sine argument is bounded. If $p \in C^2[0, \pi]$, the trapezoidal rule for integration implies
\begin{equation} \label{trap}
\int_0^{\pi/2} p(x) \sin\left( \frac{\theta}{h} x\right) \, dx =
h \sum_{j = 0}^m {\vphantom{\sum}}' p_j \sin \left( \frac{\theta}{h} x_j\right) + O(h^2).
\end{equation}
Using~\eqref{eqcos}, \eqref{trap}, the change of variables $\xi := \frac{\theta}{h}$, and taking $\frac{h}{\sin \theta} \sim \frac{1}{\xi}$ into account, we arrive at the equation
\begin{equation} \label{eqxi}
2 \cos \frac{\xi\pi}{2} + \frac{1}{\xi} \int_0^{\pi/2} p(x) \sin (\xi x) \, dx = O(h^2),
\end{equation}
where the $O$-estimate is uniform with respect to $\xi$ in the circle $|\xi| \le C_0$. Observe that the left-hand side of \eqref{eqxi} coincides with the function $R(\xi)$ defined by~\eqref{defR}. Recall that the zeros of this function are $\rho_n$ with odd indices $n$. Using Rouch\'e's Theorem, one can easily show that equation~\eqref{eqxi} has zeros $\xi_{n,l} = \rho_n + \eps_{n,l}$, where $\eps_{n,l} = o(1)$ for a fixed odd $n$ and $l \to \iy$. If $\rho_n$ is a simple zero of $R(\xi)$, we have $R'(\rho_n) \ne 0$, so
the Taylor expansion yields
$$
R(\rho_n + \eps_{n,l}) = R(\rho_n) + R'(\rho_n) \eps_{n, l} + O(\eps_{n,l}^2) = O(h^2) \quad \Rightarrow \quad \eps_{n,l} = O(h^2).
$$
Hence,
$$ 
\theta_{n,l} = h \xi_{n,l} = h \rho_n + O(h^3), \quad
\la_{n,l} = \frac{4 \sin^2 \frac{\rho_n h}{2}}{h^2} + O(h^2).
$$
Since $h \to 0$ and $\rho_n^2 = \la_n$, we arrive at the following theorem.

\begin{thm} \label{thm:small}
Suppose that $a = \frac{\pi}{2}$, $q \in C^2[0, \pi]$, and $\la_n \ne 0$ is a simple eigenvalue of \eqref{eqv2}-\eqref{bc2} with an odd $n$ (i.e. this eigenvalue does not belong to the degenerate set $\la_{2n} = (2n)^2$, $n \in \mathbb N$). Then the following asymptotic relation holds for $l = 2m-1 \to \iy$:
\begin{equation} \label{smla}
\la_{n,l} = \la_n + O(h^2), 
\end{equation}
where the $O$-estimate is uniform with respect to $n \le n_0$ for an arbitrary fixed integer $n_0$.
\end{thm}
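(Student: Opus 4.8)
The plan is to follow the reduction already prepared above, which converts the discrete characteristic equation \eqref{eqcos} into the continuous one governed by the entire function $R$ of \eqref{defR}, and then to locate and expand a single root. I regard the statement as a formalization of that computation, so the ``proof'' amounts to organizing those steps and tracking the error terms.

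First I would fix an arbitrary integer $n_0$, choose a constant $C_0 > n_0$, and work in the disk $|\theta| \le C_0 h$; the substitution $\xi := \theta / h$ turns this into the fixed disk $|\xi| \le C_0$, on which $\sin(j\theta) = \sin(\xi x_j)$ has bounded (complex) argument. For $q \in C^2[0,\pi]$, equivalently $p \in C^2[0,\pi/2]$, the trapezoidal-rule estimate \eqref{trap}, together with $\frac{h}{\sin\theta} = \frac{1}{\xi}(1 + O(h^2))$ for bounded $\xi$, rewrites \eqref{eqcos} as
\[
R(\xi) = O(h^2), \qquad |\xi| \le C_0,
\]
which is precisely \eqref{eqxi}; here the left-hand side is an $l$-independent entire function and the $O$-term is uniform on the disk. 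Recall that the zeros of $R$ with $\arg\xi \in [-\pi/2,\pi/2)$ are exactly the $\rho_n$ with odd $n$.

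Next I would localize and expand. Since by hypothesis $\la_n \ne 0$ and $\la_n$ does not belong to the degenerate set $\{(2k)^2\}$, the number $\rho_n$ is nonzero and not an even integer, so the prefactor $\rho^{-1}\sin\frac{\rho\pi}{2}$ in $\Delta(\la) = \rho^{-1}\sin\frac{\rho\pi}{2}\,R(\rho)$ is holomorphic and nonvanishing near $\rho_n$; as $\la \mapsto \rho^2$ is biholomorphic there, simplicity of the eigenvalue $\la_n$ is equivalent to $R'(\rho_n) \ne 0$. Then I would pick a small disk about $\rho_n$ meeting no other zero of $R$; on its boundary $|R| \ge c > 0$, so for $h$ small Rouch\'e's theorem gives exactly one zero $\xi_{n,l} = \rho_n + \eps_{n,l}$ of \eqref{eqxi} inside, with $\eps_{n,l} \to 0$. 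Substituting into $R(\rho_n + \eps_{n,l}) = R'(\rho_n)\eps_{n,l} + O(\eps_{n,l}^2) = O(h^2)$ yields $\eps_{n,l} = O(h^2)$, hence $\theta_{n,l} = h\rho_n + O(h^3)$ and
\[
\la_{n,l} = \frac{4\sin^2\frac{\theta_{n,l}}{2}}{h^2} = \frac{4\sin^2\frac{\rho_n h}{2}}{h^2} + O(h^2) = \rho_n^2 + O(h^2) = \la_n + O(h^2),
\]
which is \eqref{smla}. The uniformity over $n \le n_0$ is obtained by running the whole argument inside the single disk $|\xi| \le C_0$: it contains the finitely many relevant $\rho_n$, each simple, so the constants in the Rouch\'e and Taylor steps may be chosen uniformly in $n$.

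I expect the main technical obstacle to be making the approximation \eqref{trap}, and hence the passage to \eqref{eqxi}, genuinely uniform for the \emph{complex} frequencies $\xi x_j$ with $|\xi| \le C_0$ (rather than for one fixed real frequency), and then propagating that uniformity through Rouch\'e's theorem and the local inversion near each $\rho_n$; this is exactly the point where the $C^2$-smoothness of $q$ is needed to guarantee the $O(h^2)$ rate. A minor side point to check is that the spurious behavior of $\frac{1}{\xi}$ near $\xi = 0$ is harmless because it is paired with $\int_0^{\pi/2} p(x)\sin(\xi x)\,dx = O(\xi)$. Under weaker smoothness one would still get $\la_{n,l} \to \la_n$, but with a worse or unquantified rate.
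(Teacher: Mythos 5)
Your proposal follows essentially the same route as the paper: the trapezoidal-rule reduction of \eqref{eqcos} to $R(\xi)=O(h^2)$ on a fixed disk via $\xi=\theta/h$, localization of a root near $\rho_n$ by Rouch\'e's theorem, and the first-order Taylor expansion at the simple zero to get $\eps_{n,l}=O(h^2)$ and hence \eqref{smla}. The additional details you supply (linking simplicity of $\la_n$ to $R'(\rho_n)\ne 0$ through the factorization of $\Delta$, the harmlessness of $1/\xi$ near $\xi=0$, and the finiteness argument for uniformity in $n\le n_0$) are correct and only make explicit what the paper leaves implicit.
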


\textbf{``Middle'' $n$.} Clearly, if all $p_j = 0$, equation \eqref{eqcos} has the roots 
\begin{equation} \label{theta0}
\theta_{n,l}^0 = \frac{\pi n}{2 m}, \quad n = \overline{1, 2m-1}, \quad n \:\: \text{is odd}. 
\end{equation}
For each fixed $l = 2m-1$, consider such indices $n$ that $\frac{n}{2m} \ge \al$, $\frac{2m-n}{2m} \ge \al$, $\al > 0$ is fixed. Consider the circles $|\theta - \theta_{n,l}^0| = r_{l,\de}$ with the radius $r_{l,\de} = \frac{\de}{l}$, $0 < \de < \frac{\pi}{2}$. One can easily check that
$$
|\sin (j \theta)| \le C, \quad \frac{1}{|\sin \theta|} \le C, \quad |\cos (m\theta)| \ge c_0,
$$
for $|\theta - \theta_{n,l}^0| = r_{l,\de}$, sufficiently large values of $l$, and a fixed $\de$. Here $C$ and $c_0$ are positive constants independent of $n$, $l$, $\theta$. Suppose that the function $p(x)$ is bounded. Then, the right-hand side of \eqref{eqcos} is $O(h)$ as $l \to \iy$. Using Rouch\'e's Theorem, we prove that equation \eqref{eqcos} has exactly one root $\theta_{n,l}$ in the circle $|\theta - \theta_{n,l}^0| < r_{l,\de}$ for sufficiently large $l$. Since $\de > 0$ can be arbitrarily small, we have 
\begin{equation} \label{asympttheta1}
\theta_{n,l} = \theta_{n,l}^0 + \eps_{n,l}, \quad \eps_{n,l} = o(h), \quad l \to \iy.
\end{equation}
Using~\eqref{eqcos}, \eqref{theta0}, and \eqref{asympttheta1}, we obtain
$$
2 \cos(m \theta_{n,l}) = - 2 \sin \left( \frac{n\pi}{2} \right) \sin (m \eps_{n,l}) = O(h) \quad \Rightarrow \quad \eps_{n,l} = O(h^2).
$$
Hence,
$$
\cos (m \theta_{n,l}) = -\sin \left(\frac{n\pi}{2}\right) m \eps_{n,l} + O(h^3), \quad
\sin (j \theta_{n,l}) = \sin(n x_j) + O(h).
$$
Substituting the latter relations into~\eqref{eqcos}, we derive
\begin{equation} \label{asympteps}
\eps_{n,l} = \frac{h^3}{\pi \sin(\theta_{n,l})} \sin \left( \frac{n\pi}{2} \right) \left( \sum_{j = 0}^m{\vphantom{\sum}}' p_j \sin (n x_j) \right) + O\left( \frac{h^3}{\sin (\theta_{n,l})}\right).
\end{equation}
Relations \eqref{asympttheta1} and \eqref{asympteps} imply the following asymptotics for the eigenvalues $\la_{n,l} = \frac{2 - 2 \cos \theta_{n,l}}{h^2}$:
\begin{gather} \label{asymptla1}
\la_{n,l} = \frac{4 \sin^2 \left( \frac{nh}{2}\right)}{h^2} + \frac{2}{\pi} \sin \left( \frac{ n \pi}{2}\right) \left( h \sum_{j = 0}^m{\vphantom{\sum}}' p_j \sin (n x_j) \right) + O(h), \quad l = 2m-1 \to \iy, \\ \nonumber \frac{n}{2m} \ge \al, \quad \frac{2m-n}{2m} \ge \al, \quad \al > 0.
\end{gather}

\textbf{``Large'' $n$}. Now consider the case $0 < \frac{2m-n}{2m} \le \al$, $\al > 0$. Let us study equation~\eqref{eqcos} for $|\theta - \theta_{n,l}^0| \le r_{l,\de}$, where $\theta_{n,l}^0$ and $r_{l,\de}$ are defined similarly to the case of ``middle'' $n$. Applying the change of variables $\phi := \pi - \theta$ in \eqref{eqcos}, we derive the relation
\begin{equation} \label{eqcos2}
2 (-1)^m \cos (m \phi) = -\frac{h^2}{\sin \phi} \sum_{j = 0}^m{\vphantom{\sum}}' p_j (-1)^j \sin(j\phi).
\end{equation}
Note that
\begin{equation} \label{difpj}
p_j \sin(j\phi) - p_{j + 1} \sin((j + 1)\phi) = (p_j - p_{j + 1}) \sin(j \phi) + p_{j + 1} (\sin(j\phi) - \sin((j + 1)\phi)).
\end{equation}
If $p \in C^1[0, \pi]$, then $p_j - p_{j + 1} = O(h)$. For $\phi$ corresponding to $|\theta - \theta_{n,l}^0| \le r_{l, \de}$, the following estimate holds:
$$
\frac{\sin (j \phi) - \sin((j + 1) \phi)}{\sin \phi} = O(1).
$$
Therefore, using \eqref{difpj}, we conclude that the right-hand side of \eqref{eqcos2} is $O(h)$. Following the proof for ``middle'' $n$, we arrive at the asymptotics~\eqref{asymptla1} for $\frac{2m-n}{2m} \le \al$. The results for ``middle'' and ``large'' $n$ are combined together in the following theorem.

\begin{thm} \label{thm:ml}
Suppose that $q \in C^1[0, \pi]$, $a = \frac{\pi}{2}$, and $\al > 0$ is fixed. Then, the asymptotic relation \eqref{asymptla1} holds for $l = 2m-1 \to \iy$ and odd $n$ uniformly with respect to $n$ satisfying $\frac{n}{2m} \ge \al$.
\end{thm}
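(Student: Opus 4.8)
The plan is to deduce the statement by merging the two regimes analyzed just above, so the argument requires no genuinely new idea, only careful bookkeeping of the $O$-constants. Since $n$ is odd and $1\le n\le 2m-1$, one always has $\frac{2m-n}{2m}\ge\frac{1}{2m}>0$, and hence the index set $\{\,n\ \text{odd}:\ \tfrac{n}{2m}\ge\al\,\}$ decomposes into the ``middle'' indices, which also satisfy $\frac{2m-n}{2m}\ge\al$, and the ``large'' indices, which satisfy $0<\frac{2m-n}{2m}<\al$. On the first subset \eqref{asymptla1} is precisely the outcome of the ``middle''-$n$ computation, and on the second it follows from the ``large''-$n$ computation; what remains is to run both with constants depending only on $\al$ and to note that the two resulting formulas are literally the same.

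For the ``middle'' indices I would reproduce the computation given just above the theorem. The ingredients that make it uniform are: $\theta_{n,l}^0=\frac{\pi n}{2m}$ lies in $[\pi\al,\pi(1-\al)]$, so on the circles $|\theta-\theta_{n,l}^0|=r_{l,\de}$, $r_{l,\de}=\frac{\de}{l}$, $0<\de<\frac{\pi}{2}$, the bounds $|\sin(j\theta)|\le C$, $|\sin\theta|^{-1}\le C$, $|\cos(m\theta)|\ge c_0$ hold with $C,c_0$ depending only on $\al,\de$; and $p=q(\cdot)+q(\pi-\cdot)$ is bounded because $q\in C^1[0,\pi]$, so the right-hand side of \eqref{eqcos} is $O(h)$ on these circles. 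Then Rouch\'e's theorem, applied to $2\cos(m\theta)$ against the full right-hand side of \eqref{eqcos}, gives a unique root $\theta_{n,l}$ in each circle, namely the continuation of $\theta_{n,l}^0$; letting $\de\to0$ gives \eqref{asympttheta1} with $\eps_{n,l}=o(h)$; substituting back and using $2\cos(m\theta_{n,l})=-2\sin\frac{n\pi}{2}\sin(m\eps_{n,l})$ together with $m\eps_{n,l}=o(1)$ upgrades this to $\eps_{n,l}=O(h^2)$; and a first-order Taylor expansion of $\cos(m\theta_{n,l})$ and of $\sin(j\theta_{n,l})$ (using $j\theta_{n,l}^0=nx_j$ since $h=\frac{\pi}{2m}$, and $j\eps_{n,l}=O(m\eps_{n,l})=O(h)$) produces \eqref{asympteps}, whence \eqref{asymptla1} via $\la_{n,l}=\frac{2-2\cos\theta_{n,l}}{h^2}$.

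For the ``large'' indices the same scheme works once the right-hand side of \eqref{eqcos2} is shown to be $O(h)$, and this is the one step needing real care. After $\phi=\pi-\theta$ the unperturbed roots are $\phi_{n,l}^0=\frac{\pi(2m-n)}{2m}$, which can be as small as a fixed multiple of $h$, so $\sin\phi$ may be of order $h$ only, and the prefactor $\frac{h^2}{\sin\phi}$ is not by itself enough. The remedy is the summation-by-parts identity \eqref{difpj}: pairing the terms of $\sum_{j = 0}^m{\vphantom{\sum}}' p_j(-1)^j\sin(j\phi)$, using $p_j-p_{j+1}=O(h)$ (which needs $q\in C^1$), the uniform estimate $\frac{\sin(j\phi)-\sin((j+1)\phi)}{\sin\phi}=O(1)$, and the boundedness of the partial sums $\sum_{k\le j}(-1)^k\sin(k\phi)$ (uniform because $|1+e^{\mathrm i\phi}|=2|\cos(\phi/2)|$ is bounded below for $|\phi-\phi_{n,l}^0|\le r_{l,\de}$ with $\phi_{n,l}^0\le\pi\al$), one gets $\sum_{j = 0}^m{\vphantom{\sum}}' p_j(-1)^j\sin(j\phi)=O(1)$ uniformly; since $\sin\phi$ is bounded below by a fixed multiple of $h$ on these circles, the right-hand side of \eqref{eqcos2} is $O(h)$. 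The Rouch\'e, bootstrap and Taylor steps then go through exactly as before, with the circles centered at $\phi_{n,l}^0$ and the final substitution $\theta_{n,l}=\pi-\phi_{n,l}$, and yield \eqref{asymptla1} for all indices with $\frac{2m-n}{2m}\le\al$.

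The hard part is precisely this last uniformity issue near $n=2m$, and it is the reason the index range must be split: the factor $(\sin\theta_{n,l})^{-1}$ multiplying the nonlocal term blows up like $h^{-1}$ there, and only the $C^1$-smoothness of $q$, exploited through the summation-by-parts rearrangement, recovers the missing power of $h$ before dividing by $\sin\phi$; this is also why the hypothesis is merely $q\in C^1$, one derivative less than in Theorem~\ref{thm:small}. Once both subsets are covered and every $O$-constant is seen to depend only on $\al$ (through $C$, $c_0$, $\|p\|_{\iy}$, $\|p'\|_{\iy}$), the two asymptotic relations coincide, so \eqref{asymptla1} holds uniformly over the whole range $\frac{n}{2m}\ge\al$, which is the claim.
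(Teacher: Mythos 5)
Your proposal is correct and follows essentially the same route as the paper: the theorem is exactly the combination of the ``middle''-$n$ Rouch\'e/bootstrap computation and the ``large''-$n$ computation after the substitution $\phi = \pi - \theta$, with the $C^1$-smoothness of $q$ entering only through the pairing identity \eqref{difpj} to recover the $O(h)$ bound on the right-hand side of \eqref{eqcos2}. (Your extra appeal to bounded partial sums of $(-1)^k\sin(k\phi)$ is not needed — the crude per-pair estimates already give the required $O(h)$ — but it does no harm.)
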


Comparing \eqref{asymptlac} and \eqref{asymptla1}, we observe that \begin{equation} \label{approx}
\la_n - n^2 \approx \la_{n,l} - \frac{4 \sin^2 \left( \frac{nh}{2}\right)}{h^2}
\end{equation}
for ``middle'' and ``large'' $n$ in view of the trapezoidal rule
$$
    \int_0^{\pi/2} p(x) \sin (nx) \, dx \approx h \sum_{j = 0}^m{\vphantom{\sum}}' p_j \sin (n x_j).
$$
In the case of ``small'' $n$, we have $n^2 \sim \frac{4 \sin^2 \left( \frac{nh}{2}\right)}{h^2}$ as $h \to 0$, so \eqref{approx} also holds by virtue of Theorem~\ref{thm:small}. Therefore, given the eigenvalues $\{ \la_n \}$ of the continuous problem~\eqref{eqv2}-\eqref{bc2}, one can use the ``correction terms''  $\frac{4 \sin^2 \left( \frac{nh}{2}\right)}{h^2} - n^2$ to approximate the eigenvalues $\{ \la_{n,l} \}$ of the discrete problem~\eqref{eqv1}-\eqref{bc1}. This idea will be used for numerical solution of the inverse problem in Section~\ref{sec:num}.

Let us improve the results of Lemma~\ref{lem:cont} and Theorem~\ref{thm:ml} for $q \in W_1^2[0, \pi]$, $q(0) + q(\pi) = 0$,
$$
W_1^2[0, \pi] = \{ f \colon f, f' \in AC[0, \pi], \: f'' \in L(0, \pi) \}.
$$

\begin{lem} \label{lem:C2}
Suppose that $p \in W_1^2[0, \pi/2]$, $p(0) = 0$, $l = 2m-1$, and $\al > 0$ is fixed. Then
\begin{align} \label{contC2}
&    \int_0^{\pi/2} p(x) \sin (nx) \, dx = O(n^{-2}), \\ \label{discC2}
&    \sum_{j = 0}^m{\vphantom{\sum}}' p_j \sin (nx_j) = O(h), 
\end{align}
for odd $n$. The $O$-estimate in \eqref{discC2} is uniform for $\frac{n}{2m} \ge \al$.
\end{lem}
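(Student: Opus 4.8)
\textbf{Proof proposal for Lemma~\ref{lem:C2}.}

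The plan is to prove both estimates by integration/summation by parts, exploiting the boundary condition $p(0) = 0$ together with the parity properties of $\sin(nx)$ at $x = \pi/2$ for odd $n$. For the continuous estimate \eqref{contC2}, I would integrate $\int_0^{\pi/2} p(x)\sin(nx)\,dx$ by parts twice. The first integration by parts produces the boundary term $-\frac{1}{n}p(x)\cos(nx)\big|_0^{\pi/2}$; the contribution at $x=0$ vanishes because $p(0)=0$, and the contribution at $x=\pi/2$ vanishes because $\cos(n\pi/2)=0$ for odd $n$. This leaves $\frac{1}{n}\int_0^{\pi/2}p'(x)\cos(nx)\,dx$. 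Integrating by parts a second time gives a boundary term $\frac{1}{n^2}p'(x)\sin(nx)\big|_0^{\pi/2}$, which is $O(n^{-2})$ since $p' \in AC$ is bounded, plus the term $-\frac{1}{n^2}\int_0^{\pi/2}p''(x)\sin(nx)\,dx$, which is $O(n^{-2})$ because $p'' \in L(0,\pi/2)$. Hence \eqref{contC2} follows.

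For the discrete estimate \eqref{discC2}, the natural approach is a discrete summation by parts (Abel transform) applied to $h\sum_{j=0}^m{}' p_j \sin(n x_j)$, mimicking the continuous argument. One introduces the discrete antiderivative of $\{\sin(n x_j)\}_j$: since $\sum_{j} \sin(n x_j)$ telescopes against $\cos$-type kernels (via the identity $2\sin(nh/2)\sin(n x_j) = \cos(n x_{j} - nh/2) - \cos(n x_{j}+nh/2)$, and the factor $\sin(nh/2)$ stays bounded away from zero because $\frac{n}{2m}\ge\al$ keeps $nh/2$ in a compact subinterval of $(0,\pi)$), one obtains a first Abel summation whose boundary contributions involve $p_0$ and $p_m$ against cosine kernels evaluated at the endpoints. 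The term at $j=0$ is controlled by $p_0 = p(0) = 0$ (or $p_0 = O(h)$ after accounting for discretization, which is already the claimed order); the term at $j=m$ involves $\cos(n\pi/2\pm nh/2)$, which need not vanish exactly but the half-weight $\frac12 p_m$ in the primed sum is designed precisely so the relevant endpoint cosine combination telescopes correctly — this is the reason the primed summation convention appears. The interior sum after one Abel transform carries a factor $p_{j+1}-p_j = O(h)$ (since $p\in AC$), and a second Abel transform against the differences $p_{j+1}-2p_j+p_{j-1} = O(h^2)$ combined with a Riemann-sum bound $h\sum_j = O(1)$ of $p''$ yields the overall $O(h)$; alternatively one can cite the standard error bound for the trapezoidal rule applied to $x\mapsto p(x)\sin(nx)$ on $[0,\pi/2]$, whose second derivative has $L^1$-norm $O(n^2)$, giving trapezoidal error $O(h^2 n^2) = O(h^2 n^2)$, then combine with \eqref{contC2} which gives $h^{-1}\cdot(\text{integral}) = O(h^{-1}n^{-2})$ — but this route only works uniformly when $n^2 h = O(1)$, so the direct Abel-summation argument is preferable for the full range $\frac{n}{2m}\ge\al$.

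The main obstacle is the discrete estimate \eqref{discC2}, specifically getting the $O(h)$ bound \emph{uniformly} for all odd $n$ with $\frac{n}{2m}\ge\al$, including $n$ comparable to $2m$ (i.e.\ $nh$ close to $\pi$). In that regime $\sin(nh/2)$ is bounded below by a constant depending only on $\al$, so the telescoping denominators are harmless; the delicate point is bookkeeping the endpoint terms at $j=m$ so that the primed-sum half-weights exactly cancel the non-vanishing boundary cosine values, and confirming that $p_0$, regarded as $p(0)=0$, kills the left endpoint cleanly. I would carry out the argument in the order: (1) twice-integration by parts for \eqref{contC2}; (2) set up the discrete kernel identity and perform the first Abel transform for \eqref{discC2}, isolating boundary terms; (3) verify the boundary terms are $O(h)$ using $p(0)=0$ and the primed-sum convention; (4) perform a second Abel transform on the interior sum, bounding $\sum |p_{j+1}-2p_j+p_{j-1}|$ by $h\|p''\|_{L^1}$ up to the uniform factor $(\sin(nh/2))^{-2} = O_\al(1)$, to conclude $O(h)$.
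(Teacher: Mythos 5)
Your proof of \eqref{contC2} is exactly the paper's: one/two integrations by parts, with the boundary terms killed by $p(0)=0$ at the left endpoint and by $\cos(n\pi/2)=0$ (odd $n$) at the right; this part is fine. For \eqref{discC2} you take a genuinely different route. The paper expands $p$ in its odd-frequency sine series $p(x)=\sum_{r\,\text{odd}}p^{(r)}\sin(rx)$, uses the exact discrete orthogonality $\sum_{j=0}^m{}'\cos(kx_j)=m$ for $k\in 4m\mathbb Z$ and $0$ otherwise to reduce the primed sum to the aliased coefficients $\frac{2m}{\pi}\sum_{s}\int_0^{\pi/2}p(x)\sin((n+4ms)x)\,dx$, and then applies \eqref{contC2} to each term to get $m\cdot O(h^2)=O(h)$. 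Your double Abel summation is more elementary and also works: since $nh/2=\frac{\pi}{2}\cdot\frac{n}{2m}\in[\frac{\pi\al}{2},\frac{\pi}{2})$, the kernel denominators $\sin(nh/2)$ are indeed bounded below by $\sin(\pi\al/2)$; after one Abel transform the boundary contribution $p_mA_m-\frac12 p_m\sin(n\pi/2)$ equals $\frac{p_m}{2}\cot(nh/2)$ (using $\sin^2(n\pi/4)=\frac12$ for odd $n$), the constant part of $A_j=\frac12\cot(nh/2)-\frac{\cos(nx_j+nh/2)}{2\sin(nh/2)}$ telescopes against $\sum(p_{j+1}-p_j)=p_m-p_1$ and cancels the $p_m$ term, leaving $\frac{p_1}{2}\cot(nh/2)=O_\al(h)$ by $p(0)=0$; the remaining oscillatory sum is $O_\al(h)$ after a second Abel transform since $\sum_j|p_{j+2}-2p_{j+1}+p_j|\le 2h\|p''\|_{L^1}$. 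So the mechanism you describe (primed weights plus $p(0)=0$ handling the endpoints, second differences supplying the extra factor of $h$) is correct, though your sketch leaves the endpoint cancellation at $j=m$ only qualitatively described and it is the one step that must be computed explicitly. You are also right that the naive trapezoidal-error route fails uniformly for $n\sim 2m$. Comparatively, your argument is self-contained and makes no use of Fourier expansions, while the paper's argument is shorter, reuses \eqref{contC2} directly, and exposes the aliasing structure that shows the $O(h)$ rate is generally sharp.
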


\begin{proof}
Integration by parts yields
$$
\int_0^{\pi/2} p(x) \sin (nx) \, dx = -\frac{p(x) \cos (nx)}{n}  \bigg|_0^{\pi/2} + \frac{p'(x) \sin(nx)}{n^2} \bigg|_0^{\pi/2} - \frac{1}{n^2} \int_0^{\pi/2} p''(x) \sin(nx) \, dx
$$
Under the assumptions of the lemma, this implies \eqref{contC2}.

Let us prove \eqref{discC2}. The function $p \in W_1^2[0, \pi/2]$ can be represented as the Fourier series
\begin{equation} \label{Fourier}
p(x) = \sum_{\substack{r = 1 \\ r \: \text{is odd}}}^{\iy} p^{(r)} \sin (r x),
\end{equation}
where the notation $p^{(r)}$ is used for the Fourier coefficients:
\begin{equation} \label{defpr}
p^{(r)} = \frac{4}{\pi} \int_0^{\pi/2} p(x) \sin (rx) \, dx.
\end{equation}
By virtue of~\eqref{contC2}, $p^{(r)} = O(r^{-2})$, so the series \eqref{Fourier} converges absolutely and uniformly with respect to $x \in [0, \pi/2]$.
Substituting~\eqref{Fourier} into the left-hand side of~\eqref{discC2} and changing the integration order, we derive
\begin{align*} 
\sum_{j = 0}^m {\vphantom{\sum}}' p_j \sin (nx_j) & = \sum_{j = 0}^m {\vphantom{\sum}}' \sum_{\substack{r = 1 \\ r \: \text{is odd}}}^{\iy} p^{(r)} \sin (r x_j) \sin(n x_j) \\ & = \frac{1}{2} \sum_{\substack{r = 1 \\ r \: \text{is odd}}}^{\iy} p^{(r)} \sum_{j = 0}^m {\vphantom{\sum}}' (\cos((r - n)x_j) - \cos((r + n) x_j)).
\end{align*}
Calculations show that
\begin{equation*}
\sum_{j = 0}^m {\vphantom{\sum}}' \cos (kx_j) = \mbox{Re} \, \sum_{j = 0}^m {\vphantom{\sum}}' \exp\left( \frac{ikj\pi}{2m} \right) = \begin{cases} 
m, \quad k = 4 ms, \: s \in \mathbb Z, \\
0, \quad \text{otherwise}.
\end{cases}
\end{equation*}
Hence
$$
\sum_{j = 0}^m {\vphantom{\sum}}' p_j \sin (nx_j) = \frac{m}{2} \left( \sum_{\substack{r = 1 \\ r - n = 4ms}}^{\iy} p^{(r)} - \sum_{\substack{r = 1 \\ r - n = 4ms}}^{\iy} p^{(r)}\right),
$$
where $r$ and $n$ are odd, and the both series absolutely converge, since $p^{(r)} = O(r^{-2})$. Using \eqref{defpr} and taking the inequality $\al \le \frac{n}{2m} < 1$ into account, we derive
\begin{align*}
\sum_{j = 0}^m {\vphantom{\sum}}' p_j \sin (nx_j) = & \frac{m}{2} \sum_{s \colon n + 4ms \ge 1} \frac{4}{\pi}\int_0^{\pi/2} p(x) \sin ((n + 4ms) x) \, dx \\ & - \frac{m}{2} \sum_{s \colon -n + 4ms \ge 1} \frac{4}{\pi}\int_0^{\pi/2} p(x) \sin ((-n + 4ms) x) \,dx \\
= & \frac{2 m}{\pi} \sum_{s = -\iy}^{\iy} \int_0^{\pi/2} p(x) \sin((n + 4ms)x) \, dx = m \cdot O\left( \sum_{s = -\iy}^{\iy} \frac{1}{(n + 4ms)^2}\right).
\end{align*}
Since $\frac{1}{n} \le \frac{h}{\pi \al}$, we have
$$
\sum_{s = -\iy}^{\iy} \frac{1}{(n + 4ms)^2} \le \frac{2}{n^2} + 2\sum_{s = 1}^{\iy} \frac{1}{(4ms)^2} \le Ch^2.
$$
Combining the latter results together, we arrive at~\eqref{discC2}.
\end{proof}

\begin{thm} \label{thm:C2}
Suppose that $q \in W_1^2[0, \pi]$, $q(0) + q(\pi) = 0$, $a = \frac{\pi}{2}$, $l = 2m-1$, and $\al > 0$ is fixed. Then, for odd $n$,
\begin{align} \label{asymptc2}
\la_n & = n^2 + O(n^{-2}), \\ \label{asymptd2} \la_{n,l} & = \frac{4 \sin^2\left( \frac{nh}{2}\right)}{h^2} + O(h^2), \quad \text{uniformly for} \:\: \frac{n}{2m} \ge \al.
\end{align}
\end{thm}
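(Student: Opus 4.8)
The plan is to sharpen the estimates in the proofs of Lemma~\ref{lem:cont} and Theorem~\ref{thm:ml} by feeding in the decay bounds \eqref{contC2}--\eqref{discC2} of Lemma~\ref{lem:C2}, which become available because $q\in W_1^2[0,\pi]$ (hence $p\in W_1^2[0,\pi/2]$) and $q(0)+q(\pi)=0$ (hence $p(0)=q(0)+q(\pi)=0$).

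For \eqref{asymptc2}, I would return to the identity $R(\rho_n)=0$ with $\rho_n=n+\eps_n$, $\eps_n=O(n^{-1})$, from the proof of Lemma~\ref{lem:cont}. The loss there comes from the crude substitution $\sin(\rho_n t)=\sin(nt)+O(\eps_n)$ inside the integral in \eqref{defR}; instead I would integrate by parts twice directly in $\int_0^{\pi/2}p(t)\sin(\rho_n t)\,dt$, exactly as in the proof of \eqref{contC2}. The boundary term at $0$ vanishes since $p(0)=0$; the boundary term at $\pi/2$ carries the extra factor $\cos\frac{\rho_n\pi}{2}=-\sin\frac{n\pi}{2}\sin\frac{\eps_n\pi}{2}=O(\eps_n)=O(n^{-1})$ for odd $n$; and the last integral is $O(n^{-2})$ because $p''\in L_1$. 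Hence $\int_0^{\pi/2}p(t)\sin(\rho_n t)\,dt=O(n^{-2})$, so $R(\rho_n)=0$ reads $-\pi\sin\frac{n\pi}{2}\,\eps_n+O(\eps_n^3)+O(n^{-3})=0$; since $|\sin\frac{n\pi}{2}|=1$, this gives $\eps_n=O(n^{-3})$ and therefore $\la_n=(n+\eps_n)^2=n^2+O(n^{-2})$.

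For \eqref{asymptd2}, I would return to the proof of Theorem~\ref{thm:ml}, where $\theta_{n,l}=\theta_{n,l}^0+\eps_{n,l}$ with $\theta_{n,l}^0=nh$ and $\eps_{n,l}=O(h^2)$. A trigonometric expansion gives
\[
\la_{n,l}-\frac{4\sin^2\left(\frac{nh}{2}\right)}{h^2}
=\frac{2\bigl(\cos\theta_{n,l}^0-\cos\theta_{n,l}\bigr)}{h^2}
=\frac{2\sin\theta_{n,l}^0\,\eps_{n,l}}{h^2}+O\!\left(\frac{\eps_{n,l}^2}{h^2}\right),
\]
and the last term is $O(h^2)$, so it remains to prove $\sin\theta_{n,l}^0\,\eps_{n,l}=O(h^4)$ uniformly for $\frac{n}{2m}\ge\al$. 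First, $\sin\theta_{n,l}^0/\sin\theta_{n,l}=1+O(h)$ uniformly, because $|\sin\theta_{n,l}^0|=|\sin(nh)|\ge\sin h\ge ch$ while $|\eps_{n,l}|=O(h^2)=o(h)$. Second, I revisit \eqref{asympteps}: its leading term is $O(h^4/\sin\theta_{n,l})$ by \eqref{discC2}, so the issue is the remainder $O(h^3/\sin\theta_{n,l})$, which arose from bounding $\sum_{j=0}^m{\vphantom{\sum}}'p_j\bigl(\sin(j\theta_{n,l})-\sin(nx_j)\bigr)$ by $O(1)$; I would improve this to $O(h)$. Using $\sin(j\theta_{n,l})-\sin(nx_j)=2\cos\bigl(nx_j+\tfrac{j\eps_{n,l}}{2}\bigr)\sin\bigl(\tfrac{j\eps_{n,l}}{2}\bigr)$ together with $j\eps_{n,l}=O(h)$, this sum equals $\eps_{n,l}\sum_{j=0}^m{\vphantom{\sum}}'jp_j\cos(nx_j)+O(h^2)$; since $\eps_{n,l}=O(h^2)$ and $\sum_{j=0}^m{\vphantom{\sum}}'|p_j|=O(m)=O(h^{-1})$, it suffices to show $\sum_{j=0}^m{\vphantom{\sum}}'jp_j\cos(nx_j)=O(m)$. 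This I would obtain by Abel summation: writing $\cos(nx_j)$ through $e^{\pm i nx_j}$, the partial sums $\sum_{k\le j}e^{\pm i nx_k}$ are bounded by $|1-e^{inh}|^{-1}=\bigl(2\sin\frac{nh}{2}\bigr)^{-1}\le\bigl(2\sin\frac{\pi\al}{2}\bigr)^{-1}$ uniformly for $\frac{n}{2m}\ge\al$, and the increments $(j+1)p_{j+1}-jp_j=j(p_{j+1}-p_j)+p_{j+1}=O(1)$ since $p\in C^1$; summing $O(1)$ over $O(m)$ indices yields $O(m)$. Consequently $\eps_{n,l}=O(h^4/\sin\theta_{n,l})$, whence $\frac{2\sin\theta_{n,l}^0\,\eps_{n,l}}{h^2}=O(h^2)\cdot\frac{\sin\theta_{n,l}^0}{\sin\theta_{n,l}}=O(h^2)$, which is \eqref{asymptd2}.

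The main obstacle is the uniform bound $\sum_{j=0}^m{\vphantom{\sum}}'jp_j\cos(nx_j)=O(m)$, equivalently the uniform $O(h)$ estimate for the ``mismatch'' $\sum_{j=0}^m{\vphantom{\sum}}'p_j\bigl(\sin(j\theta_{n,l})-\sin(nx_j)\bigr)$: a term-by-term bound loses a whole factor of $m$, and the gain rests on summation by parts, which works only because the partial sums of $e^{\pm i nx_j}$ are uniformly bounded --- and this is exactly where the hypothesis $\frac{n}{2m}\ge\al$ enters, via $\sin\frac{nh}{2}\ge\sin\frac{\pi\al}{2}$. This is the same cancellation mechanism (now combined with the ``aliasing'' structure used for \eqref{discC2} in Lemma~\ref{lem:C2}) that forces \eqref{asymptd2} to be stated only for $\frac{n}{2m}\ge\al$; a secondary point demanding care is keeping every estimate uniform across the whole range, especially near $\theta_{n,l}^0=\pi$ where $\sin\theta_{n,l}^0$ degenerates to order $h$.
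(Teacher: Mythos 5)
Your proposal is correct and follows the same strategy as the paper: both results are obtained by feeding the decay estimates \eqref{contC2}--\eqref{discC2} of Lemma~\ref{lem:C2} back into the relations \eqref{epsn} and \eqref{asympteps} to upgrade $\eps_n$ to $O(n^{-3})$ and $\eps_{n,l}$ to $O\bigl(h^4/\sin\theta_{n,l}\bigr)$. The one place you go beyond the paper is the remainder term in \eqref{asympteps}: the paper merely asserts that it improves to $O\bigl(h^4/\sin\theta_{n,l}\bigr)$, whereas you actually prove it by showing the mismatch $\sum_{j = 0}^m{\vphantom{\sum}}' p_j\bigl(\sin(j\theta_{n,l})-\sin(nx_j)\bigr)$ is $O(h)$ via Abel summation with the uniformly bounded partial sums of $e^{\pm i n x_j}$ --- a legitimate and welcome filling of the gap, which moreover treats the ``middle'' and ``large'' $n$ regimes uniformly without the separate change of variables $\phi=\pi-\theta$. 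One minor slip: the quadratic error $\sum_{j = 0}^m{\vphantom{\sum}}'|p_j|(j\eps_{n,l})^2$ is $O(m\cdot(m\eps_{n,l})^2)=O(h)$, not $O(h^2)$ as you wrote, but since you only need the total mismatch to be $O(h)$ this does not affect the conclusion.
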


\begin{proof}
The conditions of Lemma~\ref{lem:C2} readily follow from the conditions of Theorem~\ref{thm:C2}.
Using \eqref{epsn} and \eqref{contC2}, we conclude that $\eps_n = O(n^{-3})$. This implies \eqref{asymptc2}.

Using~\eqref{discC2}, we show that $\eps_{n,l} = O(h^3)$ in \eqref{asympteps}. Consequently, one can easily prove that the remainder term in \eqref{asympteps} can be estimated as $O\left( \frac{h^4}{\sin(\theta_{n,l})}\right)$, and so~\eqref{asymptla1} takes the form~\eqref{asymptd2}.
\end{proof}

\begin{cor}
Under the assumptions of Theorem~\ref{thm:C2}, we have
$$
\la_n - n^2 = \la_{n,l} - \frac{4 \sin^2\left( \frac{nh}{2}\right)}{h^2} + O(h^2) \quad \text{uniformly for} \:\: \frac{n}{2m} \ge \al.
$$
\end{cor}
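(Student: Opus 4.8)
The plan is simply to subtract the two asymptotic relations established in Theorem~\ref{thm:C2}. From \eqref{asymptc2} we have $\la_n - n^2 = O(n^{-2})$ for odd $n$, and from \eqref{asymptd2} we have $\la_{n,l} - \frac{4 \sin^2(nh/2)}{h^2} = O(h^2)$ uniformly for $\frac{n}{2m} \ge \al$. Hence
$$
\left( \la_n - n^2 \right) - \left( \la_{n,l} - \frac{4 \sin^2(nh/2)}{h^2} \right) = O(n^{-2}) + O(h^2),
$$
and it remains only to absorb the first term into $O(h^2)$ on the indicated range of indices.

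To do this, I would use the relation between $n$, $m$, and $h$ under the hypotheses. Since $l = 2m-1$ we have $h = \frac{\pi}{l+1} = \frac{\pi}{2m}$, so the constraint $\frac{n}{2m} \ge \al$ is equivalent to $n \ge 2m\al = \frac{\pi \al}{h}$, that is, $\frac{1}{n} \le \frac{h}{\pi \al}$. Consequently $\frac{1}{n^2} \le \frac{h^2}{\pi^2 \al^2}$, so $O(n^{-2}) = O(h^2)$ uniformly over the set $\frac{n}{2m} \ge \al$, the implied constant depending only on the fixed $\al$ and on the constant in \eqref{asymptc2}. Substituting this into the displayed identity and using $O(h^2) + O(h^2) = O(h^2)$ yields the claimed equality.

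There is no real obstacle here: the corollary is a direct combination of the two estimates of Theorem~\ref{thm:C2}, the only point requiring a line of care being that the $O(n^{-2})$ term is genuinely $O(h^2)$ on the relevant set of indices, which follows from the fixed lower bound $\frac{n}{2m} \ge \al$ together with $h = \frac{\pi}{2m}$. Uniformity is preserved throughout because $\al$ is fixed and the estimate \eqref{asymptd2} is already uniform in $n$.
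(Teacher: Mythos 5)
Your proof is correct and is exactly the argument the paper intends (the corollary is stated without proof as an immediate consequence of Theorem~\ref{thm:C2}): subtract \eqref{asymptc2} from \eqref{asymptd2} and absorb $O(n^{-2})$ into $O(h^2)$ via $\frac{1}{n} \le \frac{h}{\pi\al}$, the same inequality the paper already uses in the proof of Lemma~\ref{lem:C2}. No issues.
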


Thus, under the assumptions of Theorem~\ref{thm:C2}, we obtain a better approximation rate in \eqref{approx}, comparing the with the one given by Lemma~\ref{lem:cont} and Theorem~\ref{thm:ml}. 

\section{Numerical simulation} \label{sec:num}

In this section, we develop a numerical method for recovering the potential $q(x)$ of the problem~\eqref{eqv2}-\eqref{bc2} by a finite set of the eigenvalues. For this purpose, we approximate the eigenvalues of the discrete problem~\eqref{eqv1}-\eqref{bc1}, using the ``correction terms'' obtained in Section~\ref{sec:asympt}, and then solve the discrete inverse problem by the method of Section~\ref{sec:disc}. First, the numerical algorithm is described. Second, its effectiveness is illustrated by numerical examples.

For definiteness, we study the case $a = \frac{\pi}{2}$ and $l = 2m-1$, considered in Example~\ref{ex:2} and Section~\ref{sec:asympt}. For the uniqueness of the reconstruction, we assume that the potential is symmetric: $q(x) = q(\pi - x)$, $x \in [0, \pi]$. 
Since $\la_n$ with even $n$ degenerate, we fix $l = 2m-1$, $m \in \mathbb N$ and define the set $\mathcal N := \{ n = 2k-1\colon k = \overline{1, m} \}$. We seek an approximate solution of the following problem.

\begin{ip} \label{ip:num}
Given the eigenvalues $\{ \la_n \}_{n \in \mathcal N}$, find the potential $q(x)$.
\end{ip}

Relying on relation~\eqref{approx} and the solution described in Example~\ref{ex:2}, we develop the following numerical algorithm for solving Inverse Problem~\ref{ip:num}.

\begin{alg} \label{alg:num}
Let the eigenvalues $\{ \la_n \}_{n \in \mathcal N}$ be given. We have to find $\tilde q_j \approx q(x_j)$, $j = \overline{1,l}$.

\begin{enumerate}
    \item Find the approximations $\tilde \la_{n,l}$ for the eigenvalues $\la_{n,l}$ of the discrete problem and the numbers $\mu_n$ for $n \in \mathcal N$ as follows:
\begin{equation} \label{lat}
\tilde \la_{n,l} := \la_n - n^2 + \frac{4 \sin^2\left( \frac{nh}{2}\right)}{h^2}, \quad \mu_n := 2 - h^2 \tilde \la_{n,l}, \quad n \in \mathcal N.
\end{equation}
\item Construct the polynomial $Z \in \mathcal P_{m-1}$ by the formula
$$
Z(\mu) = \prod_{n \in \mathcal N} (\mu - \mu_n) - \psi_{m + 1}(\mu) + \psi_{m-1}(\mu).
$$
\item Find the coefficients $\{ z_j \}_{j = 1}^m$ of $Z$ with respect to the basis $\{ \psi_j \}_{j = 1}^m$.
\item Put $\tilde q_j := \frac{1}{2} h^2 z_j$, $j = \overline{1, m-1}$, and $\tilde q_m := h^2 z_m$.
\end{enumerate}
\end{alg}

Consider the examples of numerical reconstruction by Algorithm~\ref{alg:num} for the potentials $q(x) = x(\pi - x)$, $q(x) = \pi/2 - |\pi/2 - x|$, and $q(x) = 1$. The corresponding functions $R(\rho)$ defined by~\eqref{defR} have the form
\begin{align*}
    q(x) = x(\pi - x) : & \quad R(\rho) = 2 \cos \frac{\rho \pi}{2} - \frac{\pi^2}{2\rho^2} \cos \frac{\rho \pi}{2} + 
    \frac{4}{\rho^4} \left( 1 - \cos \frac{\rho \pi}{2} \right), \\
    q(x) = \frac{\pi}{2} - \left| \frac{\pi}{2} - x\right| : & \quad R(\rho) = 2 \cos \frac{\rho \pi}{2} - \frac{\pi}{\rho^2} \cos \frac{\rho \pi}{2} + \frac{2}{\rho^3} \sin \frac{\rho \pi}{2},  \\
    q(x) = 1 : & \quad R(\rho) = 2 \cos \frac{\rho \pi}{2} + \frac{2}{\rho^2} \left( 1 - \cos \frac{\rho \pi}{2} \right).
\end{align*}
We use the dichotomy to find the zeros of these functions numerically and so obtain the eigenvalues $\la_n$.

The numerical results for $m = 5$ are provided below.
In the tables, $\tilde \la_{n,l}$ are defined by~\eqref{lat}, $\de_{n,l} := \la_{n,l} - \tilde \la_{n,l}$, $\tilde q_j$ are the values constructed at step~4 of Algorithm~\ref{alg:num}, $\tilde q_j \approx q(x_j)$, $\de_j := q(x_j) - \tilde q_j$. The potentials $q(x)$ are symmetric, so they are considered only on the half-interval $[0, \pi/2]$.

\begin{gather*}
q(x) = x(\pi - x): \\
\begin{array}{|c|c|c|c|c|c|}
\hline
& \la_1 & \la_3 & \la_5 & \la_7 & \la_9 \\
\hline
\la_n & 3.5895 & 8.8607 &  25.0226 &  48.9922 &  81.0036 \\
\hline
\la_{n,l} & 3.5867  & 8.2083 & 20.2868 & 32.1684 & 39.5384 \\
\hline
\tilde \la_{n,l} & 3.5813  &  8.2139 &  20.2869 &  32.1674 &  39.5403 \\
\hline
\de_{n,l} &  0.0054 & -0.0056 & -0.0001 &  0.0009 & -0.0019 \\
\hline
& q_1 & q_2 & q_3 & q_4 & q_5 \\
\hline
\tilde q_j & 0.8857 & 1.5719 & 2.0705 &  2.3665 & 2.4686 \\ 
\hline
\de_j & 0.0025 & 0.0072 & 0.0021 & 0.0022 & -0.0012 \\
\hline
\end{array} 
\end{gather*}
\begin{gather*}
q(x) = \frac{\pi}{2} - \left| \frac{\pi}{2} - x \right|: \\
\begin{array}{|c|c|c|c|c|c|}
\hline
& \la_1 & \la_3 & \la_5 & \la_7 & \la_9 \\
\hline
\la_n & 2.2432 & 9.1668 & 25.0542 & 49.0268 & 81.0160 \\
\hline
\la_{n,l} & 2.2375 & 8.5351 & 20.3321 & 32.2168 & 39.5705
 \\
\hline
\tilde \la_{n,l} & 2.2350 & 8.5200 & 20.3184 & 32.2021 & 39.5527
 \\
\hline
\de_{n,l} & 0.0024 & 0.0152 & 0.0137 & 0.0148 &   0.0178
 \\
\hline
& q_1 & q_2 & q_3 & q_4 & q_5 \\
\hline
\tilde q_j & 0.3159 & 0.6330 & 0.9441 & 1.2665 & 1.5070 
 \\ 
\hline
\de_j & -0.0017 & -0.0047 & -0.0016 & -0.0099 & 0.0638
 \\
\hline
\end{array} 
\end{gather*}
\newpage
\begin{gather*}
q(x) = 1: \\
\begin{array}{|c|c|c|c|c|c|}
\hline
& \la_1 & \la_3 & \la_5 & \la_7 & \la_9 \\
\hline
\la_n & 2.3477 &  8.4962 &  25.2631 &  48.8138 &  81.1431 \\
\hline
\la_{n,l} & 2.3303 & 7.8801 & 20.4725 & 32.0695 & 39.5689
 \\
\hline
\tilde \la_{n,l} &  2.3395 & 7.8494 & 20.5274 & 31.9890 & 39.6797
 \\
\hline
\de_{n,l} &  -0.0092 & 0.0307 & -0.0549 & 0.0804 & -0.1108 \\
\hline
& q_1 & q_2 & q_3 & q_4 & q_5 \\
\hline
\tilde q_j & 1.1752 & 0.8892 & 1.0747 & 0.9328 & 1.0639 \\ 
\hline
\de_j & -0.1752 & 0.1108 & -0.0747 & 0.0672 & -0.0639 \\
\hline
\end{array}
\end{gather*}

In Figure~\ref{fig:1}, the plots of $q(x) = x(\pi - x)$ and $q(x) = \pi/2 - |\pi/2 - x|$ on $[0, \pi/2]$ are provided. The stars indicate the values $\tilde q_j$, $j = \overline{1, m}$, recovered by Algorithm~\ref{alg:num} with $m = 5$.

\begin{figure}[h!]
\begin{center}
\includegraphics[scale = 0.18]{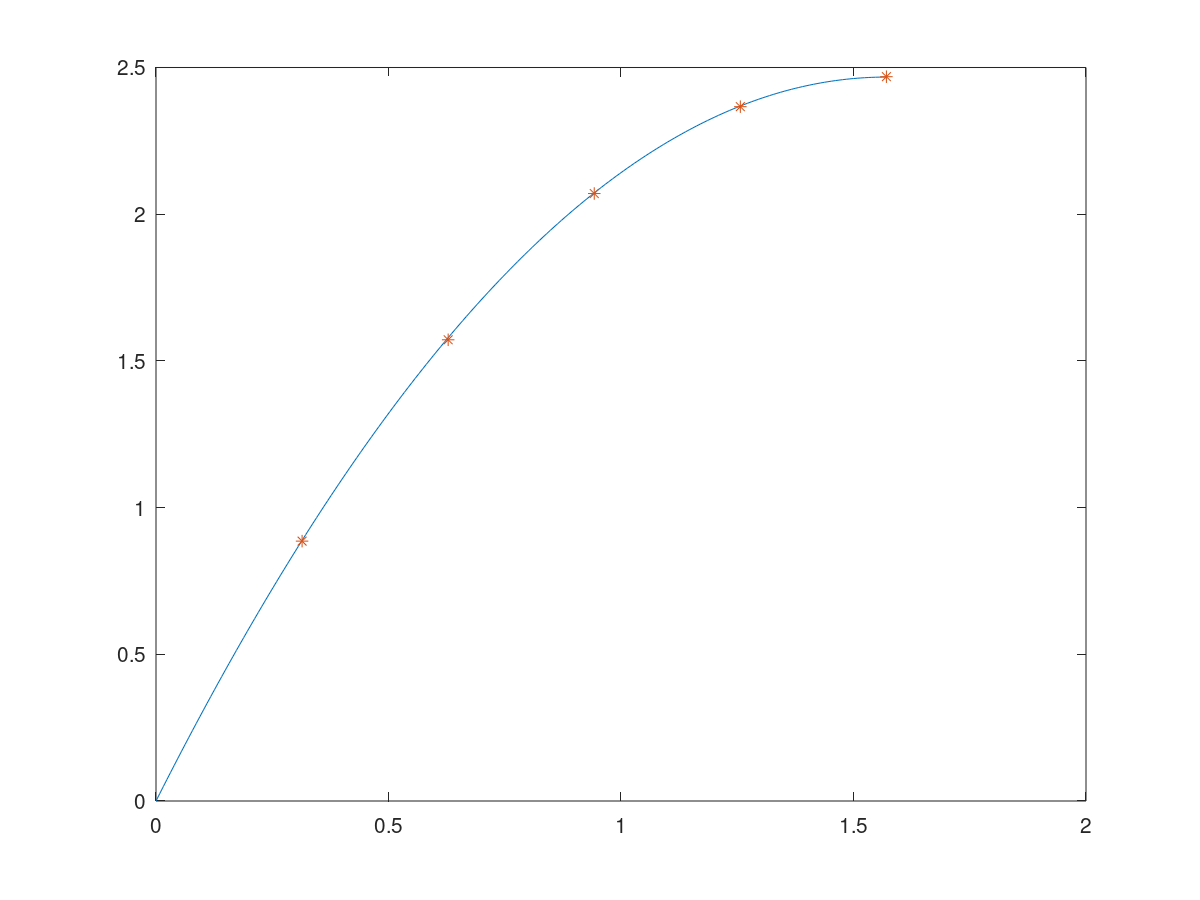}
\includegraphics[scale = 0.18]{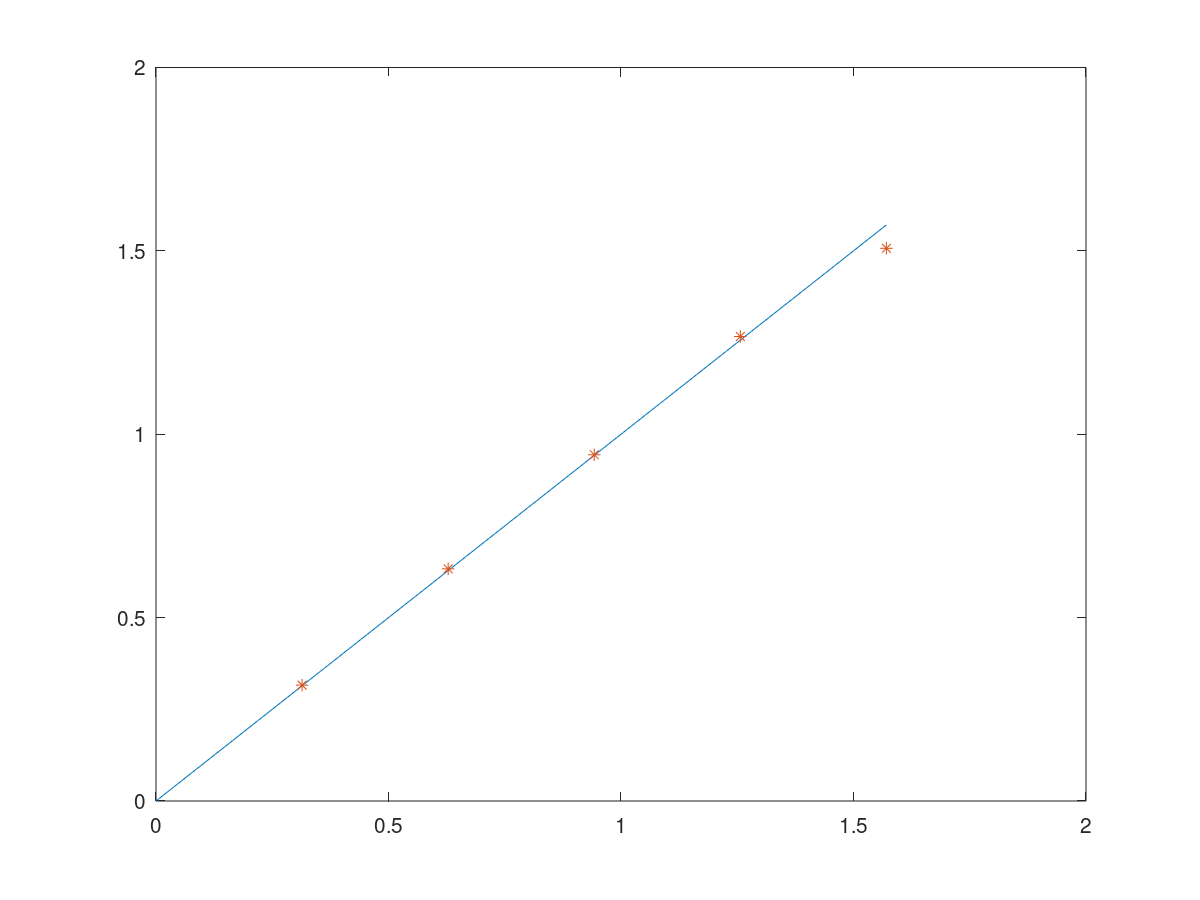}
\end{center}
\caption{Plots of $q(x) = x(\pi - x)$ and $q(x) = \pi/2 - |\pi/2 - x|$}
\label{fig:1}
\end{figure}

The results for $m = 10$ and $m = 20$ are provided in the data set \cite{DS}. These data show that the approximation error is decreasing while $m$ is growing. Anyway, in the first two examples, the potential is already recovered with significant accuracy for $m = 5$. The potential $q(x) = 1$ does not fulfill the condition $q(0) + q(\pi) = 0$, so the accuracy decreases near $x = 0$. 

\section{Generalizations and applications} \label{sec:gen}

Let us discuss the possible ways of the further development of our approach.

\begin{enumerate}
    \item For the functional-differential equation~\eqref{eqv2}, inverse spectral problems have been studied for various types of boundary conditions (the Neumann BC, periodic BC, etc.; see \cite{BBV19, BV19, BK20, BH21}). Those types of boundary conditions imply a richer structure of degenerate and non-degenerate cases than the Dirichlet ones. The methods of this paper can be applied to finite-difference approximations for other types of boundary conditions.
    \item Observe that one can choose $l$ and $m$ so that $x_m = a$ if and only if $\frac{a}{\pi} \in \mathbb Q$. If $\frac{a}{\pi} \not\in \mathbb Q$, then $\frac{a}{\pi}$ should be approximated by irreducible fractions $\frac{m}{l + 1}$. Studying the approximation of the continuous problem~\eqref{eqv2}-\eqref{bc2} by the discrete one \eqref{eqv1}-\eqref{bc1} in this case is a challenging task.
    \item Note that the matrix of the system \eqref{eqv1}-\eqref{bc1} contains the only column induced by the potential $q$. Therefore, the Laplace expansion of the characteristic determinant along this column
    leads to the expansion by the Chebyshev polynomials. The similar effect takes place for the three-diagonal matrix with an additional row of unknown coefficients. The latter case occurs for finite-difference approximations of differential operators with integral boundary conditions. For example, consider the eigenvalue problem
    $$
        -y''(x) = \la y(x), \quad x \in (0, \pi), \qquad \int_0^{\pi} v(x) y(x) \, dx = 0, \quad y(\pi) = 0.
    $$
    Its finite-difference approximation has the form
    $$
        -\frac{y_{j + 1} - 2y_j + y_{j-1}}{h^2} = \la y_j, \quad j = \overline{1, l}, \qquad \sum_{k = 0}^l v_k y_k = 0, \quad y_{l + 1} = 0,
    $$
    where the coefficients $[v_k]_{k = 0}^l$ are determined by~$v(x)$ according to some quadrature rule. Clearly, the matrix of the latter system has the row of the coefficients $[v_k]_{k = 0}^l$, so the transposed matrix has the form studied in this paper. This observation is related with the fact that the functional-differential operators with frozen argument are adjoint to differential operators with integral boundary conditions (see, e.g., \cite{Lom14, Pol21}). Consequently, the methods of our paper can be applied to the latter operator class.
    \item The ideas of our approach can be developed for investigation of inverse spectral problems for other classes of nonlocal operators, in particular, for integro-differential operators \cite{But07} and for differential operators with constant delay \cite{PVV19}.
\end{enumerate}

{\bf Data availability.} There is the data set \cite{DS} associated with this paper.

\noindent Natalia Pavlovna Bondarenko \\
1. Department of Applied Mathematics and Physics, Samara National Research University, \\
Moskovskoye Shosse 34, Samara 443086, Russia, \\
2. Department of Mechanics and Mathematics, Saratov State University, \\
Astrakhanskaya 83, Saratov 410012, Russia, \\
e-mail: {\it bondarenkonp@info.sgu.ru}

\end{document}